\documentclass[12pt,reqno]{amsart}
\usepackage{amscd}
\usepackage{amsmath}
\usepackage{mathtools, nccmath}
\usepackage{latexsym}
\usepackage{amsfonts}
\usepackage{amssymb}
\usepackage{amsthm}
\usepackage{graphicx}
\usepackage{verbatim}
\usepackage{mathrsfs}
\usepackage{enumerate}
\usepackage[colorlinks=true, linkcolor=blue, citecolor=blue, urlcolor=blue]{hyperref}
\usepackage{cleveref}
\usepackage{xcolor}

\newtheorem{theorem}{Theorem}[section]
\newtheorem{lemma}[theorem]{Lemma}
\newtheorem{corollary}[theorem]{Corollary}
\newtheorem{proposition}[theorem]{Proposition}

\theoremstyle{remark}
\newtheorem{remark}[theorem]{Remark}

\newcommand{\beq}{\begin{equation}}
\newcommand{\eeq}{\end{equation}}

\newcommand{\R}{\mathbb{R}}
\newcommand{\Rd}{{\mathbb{R}^d}}

\newcommand{\inv}{^{-1}}
\newcommand{\bbP}{\mathbb{P}}

\newcommand{\Lpnorm}[2]{\| #1 \|_{L^{#2}}}

\newcommand{\one}{^{(1)}}
\newcommand{\two}{^{(2)}}

\newcommand{\Hsnorm}[2]{\| #1 \|_{H^{#2}}}
\newcommand{\du}{\delta u}
\newcommand{\dtau}{\delta \tau}
\newcommand{\divg}{\nabla \cdot}

\newcommand{\ipltwo}[2]{\left( #1, #2\right)_{L^2}}

\newcommand{\ipHs}[3]{\left( #1, #2\right)_{H^{#3}}}

\newcommand{\EE}{\mathcal{E}}
\newcommand{\YY}{\mathcal{Y}}
\newcommand{\ZZd}{\mathcal{Z}}
\newcommand{\DD}{\mathcal{D}}

\newcommand{\LL}{\mathcal{L}}

\allowdisplaybreaks

\begin{document}

\title[Regularity of solution maps]{Regularity of the solution map for the Oldroyd--B model with fractional stress dissipation}

\author[Gerard Misio{\l}ek, Jiahong Wu and Mingyu Yu]
{Gerard Misio{\l}ek$^{1}$, Jiahong Wu$^{2}$ and Mingyu Yu$^{3}$}

\address{$^1$
	Department of Mathematics,
	University of Notre Dame,
	Notre Dame, IN 46556, USA}
\email{gmisiole@nd.edu}

\address{$^2$
	Department of Mathematics,
	University of Notre Dame,
	Notre Dame, IN 46556, USA}
\email{jwu29@nd.edu}

\address{$^3$
	Department of Mathematics,
	University of Notre Dame,
	Notre Dame, IN 46556, USA}
\email{myu4@nd.edu}

\keywords{Oldroyd--B model; viscoelastic fluids; fractional dissipation; data-to-solution map; Lipschitz continuity; continuous dependence; uniform-in-time stability}
\subjclass[2020]{35Q35, 76A10, 35B30, 35B35, 35R11}
\date{\today}

\vskip .2in
\begin{abstract}
The data-to-solution map of the incompressible Euler equations is continuous but not uniformly continuous on Sobolev spaces, whereas for the Navier--Stokes equations viscosity makes the map Lipschitz continuous on small data, uniformly in time. This paper identifies the behavior of the solution map for a hybrid system sitting between these two extremes: the incompressible Oldroyd--B model in $\R^d$, $d=2,3$, in the high-Reynolds-number, high-Weissenberg-number regime, in which the velocity equation is a forced Euler equation and the only dissipation is a fractional diffusion $\eta(-\Delta)^\beta$, $\frac12\le\beta\le1$, acting on the stress tensor. We prove that, on the ball of small $H^s$ data ($s>1+\frac d2$) on which global solutions exist, the solution map is Lipschitz continuous with respect to the $H^{r}$-norm, $r=s-(2\beta-1)$, uniformly for all time, and Lipschitz into the natural dissipation spaces; the constant is explicit and tends to $\sqrt3$ as the size of the data tends to zero. Consequently the map is H\"older continuous into $C_b([0,\infty);H^{s'})$ for every $s'<s$, and is Lipschitz in $H^s$ itself when $\beta=\frac12$. Thus, although the velocity field is only transported, the solution map behaves like that of the Navier--Stokes equations rather than that of the Euler equations. The proof exploits the damped wave structure generated by the coupling between $u$ and $\nabla\cdot\tau$ through a Lyapunov functional with a cross term $(\delta u,\nabla\cdot\delta\tau)_{H^{r-\beta}}$, which transfers the stress dissipation to the velocity difference, together with a low-frequency interpolation argument that keeps all Gronwall coefficients integrable on $(0,\infty)$. The loss of $2\beta-1$ derivatives is exactly the gap between the hidden velocity dissipation of order $2(1-\beta)$ and the dissipation required by the Lipschitz estimate, and we show that it cannot be reduced by the energy method.
\end{abstract}

\maketitle

\section{Introduction}
\setcounter{equation}{0}

The regularity of the data-to-solution map is one of the most fundamental and delicate aspects of the well-posedness theory of nonlinear fluid equations, and it separates viscous from inviscid flows in a sharp way. Consider the incompressible flow of a homogeneous fluid in $\R^d$, $d=2,3$,
\begin{equation}
	\label{eq:NSE}
	\begin{cases}
		\partial_t u + u\cdot\nabla u +\nabla p= \nu \Delta u,\qquad
		(x,t)\in \Rd\times \R_+,\\
		\divg u =0,\\
		u(x,0) = u_0(x),
	\end{cases}
\end{equation}
where $u$ is the velocity, $p$ the pressure and $\nu\ge0$ the kinematic viscosity; \eqref{eq:NSE} is the Navier--Stokes equation when $\nu>0$ and the Euler equation when $\nu=0$ (see \cite{MajdaBertozzi,BCD} for comprehensive accounts). For the Navier--Stokes equations with $d=3$, Fujita and Kato \cite{FujitaKato} constructed, for initial data small in the critical space $\dot H^{\frac12}(\R^3)$, a unique global solution in $C([0,\infty);\dot H^{\frac12})\cap L^2(0,\infty;\dot H^{\frac32})$. An important feature of the Fujita--Kato theory, developed in detail in the book of Bahouri, Chemin and Danchin \cite[Chapter 5]{BCD}, is that the solution map is Lipschitz continuous \emph{uniformly in time}: if $u$ and $v$ are the solutions emanating from two small data $u_0$ and $v_0$, then
\begin{equation}\label{eq:FK-stab}
\sup_{t\ge 0}\,\|u(t)-v(t)\|_{\dot H^{\frac12}}
+\Big(\int_0^\infty \|u(t)-v(t)\|_{\dot H^{\frac32}}^2\,dt\Big)^{\frac12}
\le C\,\|u_0-v_0\|_{\dot H^{\frac12}} ,
\end{equation}
with $C$ depending only on $\nu$ and the size of the data. Viscosity damps the high frequencies of the difference $u-v$ and produces the space-time integrability that makes the Gronwall argument behind \eqref{eq:FK-stab} close on the whole half-line.

The situation for the Euler equations is entirely different. Himonas and Misio{\l}ek \cite{AlexGerard} proved that the solution map of the incompressible Euler equations is continuous but \emph{not} uniformly continuous from bounded subsets of $H^s(\R^d)$ into $C([0,T];H^s(\R^d))$, $s>1+\frac d2$, no matter how small $T>0$ is; Inci \cite{Inci} subsequently showed that the fixed-time map $u_0\mapsto u(T)$ is nowhere locally uniformly continuous and nowhere Fr\'echet differentiable on its natural domain. The mechanism behind this loss of regularity is the transport structure of the Euler equations, which allows nearby data to be stretched apart by the flow at a rate that cannot be controlled uniformly. We stress that the failure of uniform continuity in \cite{AlexGerard,Inci} concerns the $H^s$ topology, in which the solutions themselves live: if the difference of two $H^s$ solutions is measured one derivative lower, in $H^{s-1}$, then the classical energy estimate for the difference closes and the solution map of the Euler equations is Lipschitz from bounded subsets of $H^s$ into $C([0,T];H^{s-1})$ (see, e.g., \cite[Chapter 7]{BCD}). This Lipschitz bound, however, costs a full derivative and, because the Gronwall coefficient $\|\nabla u\|_{L^\infty}$ is not integrable on $(0,\infty)$, it holds only on bounded time intervals, with a constant that grows exponentially in $T$.

\medskip
The purpose of this paper is to determine which of these two behaviors is exhibited by a hybrid system in which the velocity equation is inviscid but a second, coupled unknown carries a (fractional) dissipation. The model we consider is the incompressible Oldroyd--B system for viscoelastic fluids in the high-Reynolds-number, high-Weissenberg-number regime,
\begin{equation}
	\label{eq:OB_eta}
	\begin{cases}
		\partial_t u+u\cdot\nabla u+\nabla p=\nabla\cdot\tau,\qquad
		x\in\mathbb{R}^d,\ t>0,\\
		\partial_t \tau+u\cdot\nabla\tau+\eta(-\Delta)^\beta\tau
		+Q(\tau,\nabla u)=D(u),\\
		\nabla\cdot u=0,\\
		u(x,0)=u_0(x),\quad \tau(x,0)=\tau_0(x),
	\end{cases}
\end{equation}
where $u=u(x,t)$ is the velocity field, $p=p(x,t)$ the scalar pressure and $\tau=\tau(x,t)$ the non-Newtonian (polymeric) stress, a symmetric $d\times d$ matrix. Here $D(u)$ and $W(u)$ denote the symmetric and antisymmetric parts of the velocity gradient,
\[
D(u)=\tfrac{1}{2}\bigl(\nabla u+(\nabla u)^\top\bigr),\qquad
W(u)=\tfrac{1}{2}\bigl(\nabla u-(\nabla u)^\top\bigr),
\]
and, for a fixed constant $b\in[-1,1]$, the bilinear term
\[
Q(\tau,\nabla u)=\tau W(u)-W(u)\tau-b\bigl(D(u)\tau+\tau D(u)\bigr)
\]
encodes the objective (frame-indifferent) rotation and stretching of the elastic stress by the flow. The fractional Laplacian $(-\Delta)^\gamma$ is defined through the Fourier transform, $\widehat{(-\Delta)^\gamma f}(\xi)=|\xi|^{2\gamma}\hat f(\xi)$, and we write $\Lambda=(-\Delta)^{\frac12}$. The parameters $\eta>0$ and $\frac12\le\beta\le1$ are constants; $\beta=1$ is the standard diffusive-stress model, while $\beta<1$ describes nonlocal diffusion in the stress.

The Oldroyd--B model is one of the most widely used constitutive systems for incompressible viscoelastic fluids, typically a Newtonian solvent carrying long elastic polymer chains. It can be derived as an exact moment closure of a Navier--Stokes--Fokker--Planck system for dilute suspensions of Hookean dumbbells; the stress $\tau$ is (up to a multiple of the identity absorbed in the pressure) the deviation of the conformation tensor from equilibrium. The general model contains a viscous term $\nu(-\Delta)^\alpha u$ in the momentum equation and a damping term $a\tau$ in the stress equation, with $\nu\sim 1/\mathrm{Re}$ and $a\sim1/\mathrm{We}$, where $\mathrm{Re}$ and $\mathrm{We}$ are the Reynolds and Weissenberg numbers. The system \eqref{eq:OB_eta} is the limiting case $\mathrm{Re}=\mathrm{We}=\infty$: the solvent is inviscid, the stress relaxation is absent, and the sole regularizing mechanism is the wave-number dependent dissipation $\eta(-\Delta)^\beta\tau$. This is precisely the regime that is most challenging both numerically and analytically, and the one where the interaction between the velocity and the stress plays the decisive role.

\medskip
Even the most fundamental questions for \eqref{eq:OB_eta}, such as the global existence of solutions and their stability, are far from trivial. The two equations are tightly coupled: the velocity is driven by $\nabla\cdot\tau$, while the stress is driven by $\nabla u$ through $D(u)$ and $Q(\tau,\nabla u)$. Without the stress, the velocity equation is the Euler equation, whose solutions can develop small scales extremely rapidly. In two dimensions the gradient of the vorticity $\omega=\nabla\times u$ obeys
\[
\|\nabla \omega(t)\|_{L^p}
\lesssim
\bigl(\|\nabla \omega_0\|_{L^p}\bigr)^{\exp(\|\omega_0\|_{L^\infty}t)},
\qquad 2 \le p \le \infty,
\]
and Kiselev and \v{S}ver\'ak \cite{KiselevSverak} showed that this double-exponential rate is actually attained (see also \cite{Denisov,Zlatos}). In three dimensions, Elgindi \cite{Elgindi} proved that $C^{1,\alpha}$ solutions of the Euler equations can blow up in finite time (see also \cite{EGM}), and Chen and Hou \cite{ChenHou} established finite-time blow-up for smooth data in the presence of a boundary. In the absence of velocity dissipation there is therefore no reason, a priori, to expect the $H^s$-norm of $u$ to remain bounded, let alone to expect a stable dependence on the data; whatever control one obtains must come from the stress equation through the coupling.

That such control is available was discovered by Constantin, Wu, Zhao and Zhu \cite{CWZZ}. Applying the Leray projection $\bbP=I-\nabla\Delta\inv\divg$ and using $\bbP\nabla\cdot D(u)=\frac12\Delta u$ for divergence-free $u$, they observed that any solution of \eqref{eq:OB_eta} also solves the system of damped wave equations
\begin{equation}
	\label{eq:wave-nonlinear}
	\begin{cases}
		\partial_t^2 u+\eta(-\Delta)^\beta\partial_t u-\tfrac12\Delta u
		= N_1,\\[3pt]
		\partial_t^2\, \bbP\nabla\cdot\tau+\eta(-\Delta)^\beta\partial_t\,\bbP\nabla\cdot\tau-\tfrac12\Delta\,\bbP\nabla\cdot\tau
		= N_2,
	\end{cases}
\end{equation}
where the nonlinear terms are
\[
N_1=-\bigl(\partial_t+\eta(-\Delta)^\beta\bigr)\bbP(u\cdot\nabla u)-\bbP\nabla\cdot(u\cdot\nabla\tau)-\bbP\nabla\cdot Q(\tau,\nabla u),
\]
\[
N_2=-\tfrac12\Delta\,\bbP(u\cdot\nabla u)-\partial_t\bbP\nabla\cdot\bigl(u\cdot\nabla\tau+Q(\tau,\nabla u)\bigr).
\]
The structure of \eqref{eq:wave-nonlinear} is remarkable: although the velocity equation in \eqref{eq:OB_eta} contains no dissipation whatsoever, the coupling with $\tau$ endows $u$ with both a damping and a dispersive mechanism. At the level of the Fourier symbol, $\lambda^2+\eta|\xi|^{2\beta}\lambda+\frac12|\xi|^2=0$, the slowly decaying mode at high frequencies behaves like $\lambda\approx -\frac{1}{2\eta}|\xi|^{2(1-\beta)}$, so that the velocity effectively experiences a fractional dissipation of order $2(1-\beta)$. Exploiting this structure through a Lyapunov functional containing the cross term $(u,\nabla\cdot\tau)_{H^{s-\beta}}$, \cite{CWZZ} obtained the small-data global well-posedness and stability of \eqref{eq:OB_eta} for all $\frac12\le\beta\le1$; the threshold $\beta=\frac12$ is the current limit of the theory, and the case $0\le\beta<\frac12$ remains open. The same structure was subsequently used by Wang, Wu, Xu and Zhong \cite{WWXZ}, who represented \eqref{eq:wave-nonlinear} in integral form and derived sharp large-time decay rates for the solutions. Small-data global solutions of \eqref{eq:OB_eta} in critical Besov spaces were obtained by Wu and Zhao \cite{WuZhao}, and Chen, Liu, Qin and Ye \cite{ChenLiuQinYe} treated a class of large data in an $L^p$ framework when $\beta=1$. For the related models in which the stress damping $a\tau$ is kept, small-data global well-posedness without velocity dissipation was established by Elgindi and Rousset \cite{ElgindiFrederic} in two dimensions and by Elgindi and Liu \cite{ElgindiLiu} in three dimensions, and in the fully dissipative case Constantin and Kliegl \cite{ConstKliegl} proved global regularity in two dimensions.

The stability result of \cite{CWZZ} is the starting point of the present work. We state it in the form in which we shall use it.

\begin{proposition}[\cite{CWZZ}]
	\label{prop:small-data}
	Let $d=2,3$, $s>1+\frac{d}{2}$, $\eta>0$ and $\frac12\le\beta\le1$. Assume $(u_0,\tau_0)\in H^s(\R^d)$, $\nabla\cdot u_0=0$ and $\tau_0$ is symmetric. There exist $\varepsilon_*>0$ and $C_*\ge1$, depending only on $d,s,\beta$ and $\eta$, such that if
	\[
	\|u_0\|_{H^s}+\|\tau_0\|_{H^s}\le \varepsilon\le\varepsilon_*,
	\]
	then \eqref{eq:OB_eta} has a unique global solution $(u,\tau)\in C([0,\infty);H^s(\R^d))$, and for all $t\ge0$,
	\begin{equation}\label{eq:small-data-bound}
	\|u(t)\|_{H^s}^2 + \|\tau(t)\|_{H^s}^2
	+ \int_0^{t} \Bigl(\|\nabla u(t')\|_{H^{s-\beta}}^2+\|\Lambda^{\beta}\tau(t')\|_{H^s}^2\Bigr)\,dt'
	\le C_*\,\varepsilon^2 .
	\end{equation}
\end{proposition}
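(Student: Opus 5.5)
The plan is to combine a local existence theory with a global a priori bound obtained from a Lyapunov functional carrying a cross term, and then close by a bootstrap and continuation argument. Local well-posedness in $C([0,T];H^s)$ for $s>1+\frac d2$ is standard: the system \eqref{eq:OB_eta} is a transport system for $u$ coupled to a parabolic-type equation for $\tau$, so Friedrichs mollification or iteration in $H^s$ gives local solutions and uniqueness. The global statement therefore reduces to the a priori estimate \eqref{eq:small-data-bound} under the smallness hypothesis.

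\textbf{Step 1 (energy estimate).} Apply $\Lambda^k$, $0\le k\le s$, take the $L^2$ inner product with $(u,\tau)$, and sum. The linear coupling cancels, because $(\nabla\cdot\tau,u)+(D(u),\tau)=0$. This gives
\[
\tfrac12\tfrac{d}{dt}\bigl(\|u\|_{H^s}^2+\|\tau\|_{H^s}^2\bigr)+\eta\|\Lambda^\beta\tau\|_{H^s}^2
= -(\text{commutator terms})-(Q(\tau,\nabla u),\tau)_{H^s}.
\]
The dangerous term is $([\Lambda^s,u\cdot\nabla]u,\Lambda^s u)$, which is bounded by $\|\nabla u\|_{L^\infty}\|u\|_{H^s}^2$. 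Since $u$ has no direct dissipation, this term must be controlled through a velocity dissipation that is still to be produced.

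\textbf{Step 2 (cross term).} Differentiate $-(u,\nabla\cdot\tau)_{H^{s-\beta}}$ in time and use the two equations. Since $\bbP\nabla\cdot D(u)=\frac12\Delta u$, this produces $\frac12\|\nabla u\|_{H^{s-\beta}}^2$, and the remaining terms are:
\begin{itemize}
\item the term $-\|\bbP\nabla\cdot\tau\|_{H^{s-\beta}}^2$, bounded by $\|\tau\|_{H^{s+1-\beta}}^2\lesssim\|\tau\|_{H^{s+\beta}}^2$, using $\beta\ge\frac12$;
\item the term $\eta(u,\nabla\cdot\Lambda^{2\beta}\tau)_{H^{s-\beta}}$, bounded by $\frac14\|\nabla u\|_{H^{s-\beta}}^2+C\|\Lambda^{\beta}\tau\|_{H^{s}}^2$;
\item nonlinear terms.
\end{itemize}
We then form $\mathcal E=\|u\|_{H^s}^2+\|\tau\|_{H^s}^2-\delta(u,\nabla\cdot\tau)_{H^{s-\beta}}$ with $\delta$ small. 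For small $\delta$ this functional is equivalent to the $H^s$ norm, because $s-\beta+1\le s+\frac12$ and the cross term is bounded by $\|u\|_{H^{s-\beta}}\|\tau\|_{H^{s-\beta+1}}\le\|u\|_{H^s}\|\tau\|_{H^s}$. It satisfies
\[
\tfrac{d}{dt}\mathcal E+c\bigl(\|\nabla u\|_{H^{s-\beta}}^2+\|\Lambda^\beta\tau\|_{H^s}^2\bigr)\le C\,\mathcal E^{1/2}\,\mathcal D,
\]
where $\mathcal D$ denotes the dissipation in parentheses.

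\textbf{Step 3 (nonlinear estimates, the main obstacle).} Each cubic term must be bounded by $\mathcal E^{1/2}\mathcal D$. The hard one is $\|\nabla u\|_{L^\infty}\|u\|_{H^s}^2$. I would split $u$ into low and high frequencies. The high part has $\|u_{high}\|_{H^s}\lesssim\|\nabla u\|_{H^{s-\beta}}$ only if $\beta\le 1$, which holds, but this gains just $1-\beta$ derivatives. This loss must be absorbed by interpolating: estimate $\|\nabla u\|_{L^\infty}\lesssim\|\nabla u\|_{H^{s-\beta}}$ using $s-\beta>\frac d2$, and put the factor $\|u\|_{H^s}^2$ into $\mathcal E$.

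Done naively, this yields only $\mathcal E\,\mathcal D^{1/2}$, which is not integrable in time. The fix is to rewrite the top-order transport term using the equation structure. On the high-frequency part, the dangerous commutator is paired against $\Lambda^{s}u$, which I would bound by $\|u\|_{H^s}\|\nabla u\|_{H^{s-\beta}}\|\nabla u\|_{L^\infty}$ after moving $\Lambda^{\beta}$ derivatives; this requires the fractional commutator estimate for $[\Lambda^{s},u\cdot\nabla]$ split as $\Lambda^{s-\beta}$ and $\Lambda^\beta$. On the low-frequency part $\|u\|_{H^s}\lesssim\|u\|_{L^2}$, so that piece is harmless. The terms involving $\tau$ are easier, because $\tau$ has full $\Lambda^\beta$ dissipation and $\nabla u$ appears only at order $s$, while $\tau\in H^{s+\beta}$ can absorb the extra derivative when $\beta\ge\frac12$.

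\textbf{Step 4 (closing).} From Step 3 we obtain $\frac{d}{dt}\mathcal E+(c-C\mathcal E^{1/2})\mathcal D\le0$. A bootstrap on $\mathcal E\le 4C\varepsilon^2$ with $\varepsilon_*$ small gives \eqref{eq:small-data-bound}. Continuation of the local solution then yields the global solution in $C([0,\infty);H^s)$, and uniqueness follows from an $L^2$ difference estimate.
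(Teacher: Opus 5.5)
The paper does not reprove this proposition; it quotes it from \cite{CWZZ}. Your overall route is the one used there: an $H^s$ energy estimate, a cross term $(u,\divg\tau)_{H^{s-\beta}}$ producing the hidden velocity dissipation $\|\nabla u\|_{H^{s-\beta}}^2$, an $\mathcal E^{1/2}\mathcal D$ bound for every cubic term, and a bootstrap. However, several steps fail as written.

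\textbf{Sign and equivalence.} The sign of the cross term is wrong. $\frac{d}{dt}(u,\divg\tau)_{H^{s-\beta}}$ contains $(u,\divg D(u))_{H^{s-\beta}}=-\frac12\|\nabla u\|_{H^{s-\beta}}^2$, so the functional must be $\|u\|_{H^s}^2+\|\tau\|_{H^s}^2+\delta(u,\divg\tau)_{H^{s-\beta}}$ with $\delta>0$ (compare $\LL$ in the paper). With your $-\delta$ the functional produces anti-dissipation. Your equivalence argument is false for $\beta<1$, since $\|\tau\|_{H^{s+1-\beta}}$ is not bounded by $\|\tau\|_{H^s}$. You must move $\beta$ derivatives onto $u$:
\[
|(\Lambda^{s-\beta}u,\Lambda^{s-\beta}\divg\tau)_{L^2}|\le\|\Lambda^su\|_{L^2}\|\Lambda^{s+1-2\beta}\tau\|_{L^2}.
\]
This, together with $\|\bbP\divg\tau\|_{H^{s-\beta}}^2\lesssim\|\Lambda^\beta\tau\|_{H^s}^2$, is where $\beta\ge\frac12$ enters. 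That last bound must use the homogeneous dissipation, not $\|\tau\|_{H^{s+\beta}}^2$, which contains the undissipated $\|\tau\|_{L^2}^2$.

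\textbf{Step 3 misplaces the difficulty.} The velocity transport term needs neither a frequency splitting nor any ``equation structure''. Since $1\le s\le s+1-\beta$, one has $\|\Lambda^su\|_{L^2}\le\|\nabla u\|_{H^{s-\beta}}$ for all of $u$. Kato--Ponce then gives $|([\Lambda^s,u\cdot\nabla]u,\Lambda^su)_{L^2}|\lesssim\|u\|_{H^s}\|\nabla u\|_{H^{s-\beta}}^2$, and the $L^2$-level transport terms vanish.

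Your low-frequency justification (``$\|u\|_{H^s}\lesssim\|u\|_{L^2}$, so harmless'') is wrong. It leads to $\|\nabla u\|_{L^\infty}\|u\|_{L^2}^2$, which is exactly the non-integrable $\mathcal E\mathcal D^{1/2}$ you set out to avoid. Low frequencies are harmless only because no undifferentiated $u$ occurs.

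The term that does need low-frequency care is one you called easy: $(Q(\tau,\nabla u),\tau)_{L^2}$, whose $b$-part does not cancel. The dissipation controls $\Lambda^\beta\tau$ but not $\tau$, so putting both copies of $\tau$ in the energy gives only $\mathcal E\mathcal D^{1/2}$. You need something like $\|\nabla u\|_{L^2}\|\tau\|_{L^4}^2$ with $\|\tau\|_{L^4}^2\lesssim\|\tau\|_{L^2}^{2-\frac d2}\|\nabla\tau\|_{L^2}^{\frac d2}$. This yields $\mathcal E^{1/2}\mathcal D$ for $d=2,3$; it is the device the paper uses for $J_{3,\mathrm{low}}$--$J_{6,\mathrm{low}}$.

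\textbf{Local existence and uniqueness.} Neither is ``standard'' when $\beta<1$. In $(\Lambda^sQ(\tau,\nabla u),\Lambda^s\tau)_{L^2}$ the best split is $(\Lambda^{s-\beta}Q(\tau,\nabla u),\Lambda^{s+\beta}\tau)_{L^2}$, which needs $\Lambda^{s+1-\beta}u$. A transport-plus-parabolic $H^s$ scheme therefore loses $1-\beta$ derivatives of $u$. The cross term recovers them, but only when $\|\tau\|_{L^\infty}$ is small compared with $\sqrt{\delta\eta}$.

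Smallness is not a mere technicality. Take $b=0$ and constant stress $\bar\tau=a\,e_1\otimes e_1$ with $a>1$. For $\xi\parallel e_1$ the linearization at $(0,\bar\tau)$ reduces to $\lambda^2+\eta|\xi|^{2\beta}\lambda-\frac{a-1}{2}|\xi|^2=0$, which has a positive root of size $\sim|\xi|^{2-2\beta}$.

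The same loss breaks your $L^2$ difference estimate. $(Q(\tau^{(1)},\nabla\du),\dtau)_{L^2}$ costs a full derivative of $\du$, while $\dtau$ gains only $\beta$. This is exactly why the paper's difference estimate carries the cross term $(\du,\divg\dtau)_{H^{r-\beta}}$. Existence should come from running the Lyapunov estimate on Friedrichs approximations, uniformly in the mollification under the smallness assumption. Uniqueness should come from a difference estimate that also includes a cross term.
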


In \cite{CWZZ} the bound \eqref{eq:small-data-bound} is obtained in the form $\|u\|_{H^s}^2+\|\tau\|_{H^s}^2+2\int_0^t(\eta\|\Lambda^\beta\tau\|_{H^s}^2+\frac{k_0}{2}\|\nabla u\|_{H^{s-\beta}}^2)dt'\le C\varepsilon^2$ with a small constant $k_0=k_0(\eta)$; \eqref{eq:small-data-bound} follows by absorbing $\eta$ and $k_0$ into $C_*$. The velocity dissipation $\int_0^\infty\|\nabla u\|_{H^{s-\beta}}^2dt$ in \eqref{eq:small-data-bound} is exactly the hidden dissipation of order $2(1-\beta)$ predicted by \eqref{eq:wave-nonlinear}; it is weaker than the $H^{s+1}$ regularization of the Navier--Stokes equations by $\beta$ derivatives, and it is the only information on $u$ beyond $L^\infty_tH^s$ that we shall use.

\medskip
\noindent\textbf{Main result.}
The main goal of this paper is to understand the behavior of the data-to-solution map of \eqref{eq:OB_eta}: does it behave like the solution map of the Navier--Stokes equations, for which \eqref{eq:FK-stab} holds, or like that of the Euler equations, which is not even uniformly continuous? The velocity equation is an Euler equation, the stress dissipation is only fractional and only partially transmitted to $u$, and the difference of two solutions is subject to the same stretching mechanisms that destroy uniform continuity for the Euler flow. Nevertheless, we show that the solution map is \emph{Lipschitz continuous uniformly for all time}, with a loss of $2\beta-1$ derivatives, in the spirit of \eqref{eq:FK-stab}: the Oldroyd--B system behaves like the Navier--Stokes equations even though its velocity is only transported. Compared with the Euler equations, where a Lipschitz bound is available only in $H^{s-1}$ and only on bounded time intervals, the uniformity in time is new for every $\beta\in[\frac12,1]$; the loss of regularity is strictly smaller than one derivative when $\beta<1$, and there is no loss at all when $\beta=\frac12$, so that the endpoint case behaves exactly like the Navier--Stokes equations. The precise statement is the following; for a pair $(f,g)$ we write $\|(f,g)\|_{H^\rho}:=(\|f\|_{H^\rho}^2+\|g\|_{H^\rho}^2)^{1/2}$.

\begin{theorem}
	\label{thm:unif-dep}
	Let $d=2,3$, $s>1+\frac{d}{2}$, $\eta>0$ and $\frac{1}{2}\le\beta\le 1$, and set
	\[
	r=s-(2\beta-1).
	\]
	There exist $\varepsilon_0\in(0,\varepsilon_*]$ and $C_1>0$, depending only on $d,s,\beta$ and $\eta$, with the following property. Let $0<\varepsilon\le\varepsilon_0$ and let $(u^{(1)},\tau^{(1)})$ and $(u^{(2)},\tau^{(2)})$ be the global solutions of \eqref{eq:OB_eta} given by \Cref{prop:small-data} with initial data satisfying $\|u_0^{(i)}\|_{H^s}+\|\tau^{(i)}_0\|_{H^s}\le\varepsilon$, $i=1,2$. Then, for all $t\ge0$,
	\begin{equation}
		\label{eq:unif-dep}
		\bigl\|\bigl(u^{(1)}(t)-u^{(2)}(t),\,\tau^{(1)}(t)-\tau^{(2)}(t)\bigr)\bigr\|_{H^r}
		\le \sqrt3\, e^{C_1\varepsilon^2}\,
		\bigl\|\bigl(u^{(1)}_0-u^{(2)}_0,\,\tau^{(1)}_0-\tau^{(2)}_0\bigr)\bigr\|_{H^r},
	\end{equation}
	and, in addition, the differences are Lipschitz in the dissipation spaces,
	\begin{equation}
		\label{eq:unif-dep-diss}
		\int_0^\infty\Bigl(\Hsnorm{\nabla(u^{(1)}-u^{(2)})}{r-\beta}^2
		+\Hsnorm{\Lambda^\beta(\tau^{(1)}-\tau^{(2)})}{r}^2\Bigr)dt
		\le C_1\, e^{C_1\varepsilon^2}\,
		\bigl\|\bigl(u^{(1)}_0-u^{(2)}_0,\,\tau^{(1)}_0-\tau^{(2)}_0\bigr)\bigr\|_{H^r}^2 .
	\end{equation}
\end{theorem}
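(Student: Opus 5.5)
We write $\du=u\one-u\two$, $\dtau=\tau\one-\tau\two$. Subtracting the two copies of \eqref{eq:OB_eta} gives
\begin{align*}
&\partial_t\du+u\one\cdot\nabla\du+\du\cdot\nabla u\two+\nabla\delta p=\divg\dtau,\\
&\partial_t\dtau+u\one\cdot\nabla\dtau+\du\cdot\nabla\tau\two+\eta\Lambda^{2\beta}\dtau+Q(\dtau,\nabla u\one)+Q(\tau\two,\nabla\du)=D(\du).
\end{align*}
The plan is to build a Lyapunov functional
\[
\mathcal{E}(t)=\Hsnorm{\du}{r}^2+\Hsnorm{\dtau}{r}^2-2\kappa\,\ipHs{\du}{\divg\dtau}{r-\beta},
\]
with a small constant $\kappa=\kappa(\eta)>0$. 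This mirrors the functional of \cite{CWZZ}, but one level down in regularity. Since $\beta\ge\frac12$, $|\ipHs{\du}{\divg\dtau}{r-\beta}|\le\Hsnorm{\du}{r}\Hsnorm{\dtau}{r}$. Hence for small $\kappa$ the functional satisfies $\frac12\|(\du,\dtau)\|_{H^r}^2\le\mathcal E\le\frac32\|(\du,\dtau)\|_{H^r}^2$, and this is where the factor $\sqrt3$ will come from.

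\textbf{Linear part.} In the $H^r$ energy identity, the linear coupling terms $\ipHs{\divg\dtau}{\du}{r}+\ipHs{D(\du)}{\dtau}{r}$ cancel, since $\du$ is divergence free. The stress dissipation gives $2\eta\Hsnorm{\Lambda^\beta\dtau}{r}^2$. Differentiating the cross term yields
\[
\kappa\,\Hsnorm{\nabla\du}{r-\beta}^2 \quad\text{(using } \bbP\divg D(\du)=\tfrac12\Delta\du\text{)}
\]
minus $\kappa\Hsnorm{\divg\dtau}{r-\beta}^2$ and a term $\kappa\eta(\Lambda^{2\beta}\divg\dtau,\du)_{H^{r-\beta}}$. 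Both are bounded by $C\kappa\Hsnorm{\Lambda^\beta\dtau}{r}^2+\frac\kappa4\Hsnorm{\nabla\du}{r-\beta}^2$, because $1-\beta\le\beta$ and $\Lambda^{\beta}\cdot\Lambda^{1}\cdot\Lambda^{-\beta}$ has order $1$. (Low frequencies need a separate check, see below.) With $\kappa$ small we obtain
\[
\frac{d}{dt}\mathcal E+c\,\mathcal D\le \text{(nonlinear terms)},\qquad \mathcal D:=\Hsnorm{\nabla\du}{r-\beta}^2+\Hsnorm{\Lambda^\beta\dtau}{r}^2 .
\]

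\textbf{Nonlinear terms.} Each term must be bounded by $C\,(\text{something integrable in time})\cdot\mathcal E+C\varepsilon\mathcal D$. The critical one is $\ipHs{Q(\tau\two,\nabla\du)}{\dtau}{r}$, together with the companion $\ipHs{\du\cdot\nabla u\two}{\du}{r}$ terms. Here the derivative count fixes $r$. We move $\Lambda^{\beta}$ onto $\dtau$ and keep $\nabla\du$ in $H^{r-\beta}$, which needs $\tau\two$ in a space of order $r+\beta-(r-\beta)+\ldots$. The commutator/product estimate closes exactly when $r+1-\beta\le s-\beta+\beta$, i.e.\ when $\nabla\du$ can only be controlled in $H^{r-\beta}$ while $\Lambda^\beta\dtau$ lies in $H^{r}$. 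The mismatch $r+1-(r-\beta)-\beta$ is absorbed by choosing $r=s-(2\beta-1)$, so that the rough factor is $\|\tau\two\|_{H^{s}}$ or $\|\Lambda^\beta\tau\two\|_{H^s}$.

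The transport terms $u\one\cdot\nabla\du$ and $u\one\cdot\nabla\dtau$ are handled by Kato--Ponce commutators. They give a factor $\|\nabla u\one\|_{L^\infty}+\|u\one\|_{H^s}$ times $\mathcal E$. The terms where the difference carries fewer derivatives, such as $\du\cdot\nabla\tau\two$ and $Q(\dtau,\nabla u\one)$, are bounded by $\|\nabla\tau\two\|_{H^{r}}$ or $\|\nabla u\one\|_{H^{s-\beta}}$ times $\mathcal E^{1/2}\mathcal D^{1/2}$, and we then use Young's inequality. The cross term's nonlinear pieces lose one derivative, which is exactly compensated by its lower index $r-\beta$.

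\textbf{Main obstacle.} The key difficulty is making every Gronwall coefficient lie in $L^1(0,\infty)$. By Proposition~\ref{prop:small-data}, only $\|\nabla u\|_{H^{s-\beta}}^2$ and $\|\Lambda^\beta\tau\|_{H^s}^2$ are integrable; $\|\nabla u\|_{L^\infty}$ itself is not. So a term like $\|\nabla u\one\|_{L^\infty}\Hsnorm{\du}{r}^2$ cannot be left in this form. We split $\du$ into low and high frequencies. For the high part, $\Hsnorm{\du_{\rm high}}{r}\lesssim\Hsnorm{\nabla\du}{r-\beta}$, which is absorbed into $\varepsilon\mathcal D$. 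For the low part, we use interpolation: the coefficient $\|\nabla u\one\|_{L^\infty}$ is bounded through $\|\nabla u\one\|_{H^{s-\beta}}$ (note $s-\beta>d/2$ may fail only when $\beta$ is large; in that case we interpolate between $\|u\one\|_{H^s}$ and the dissipation). The product is then arranged as $\|\nabla u\one\|_{H^{s-\beta}}\mathcal E^{1/2}\mathcal D^{1/2}\le\delta\mathcal D+C\|\nabla u\one\|_{H^{s-\beta}}^2\mathcal E$. The same low-frequency issue arises in the linear cross-term step, where $\Hsnorm{\du}{r}$ at low frequency is not controlled by $\mathcal D$; there we use only $\Lambda^\beta$-weighted quantities, which are homogeneous.

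The result is
\[
\mathcal E'+\tfrac c2\mathcal D\le C\bigl(\|\nabla u\one,\nabla u\two\|_{H^{s-\beta}}^2+\|\Lambda^\beta\tau\one,\Lambda^\beta\tau\two\|_{H^s}^2\bigr)\mathcal E,
\]
provided $\varepsilon\le\varepsilon_0$. By Proposition~\ref{prop:small-data} the coefficient has integral at most $C\varepsilon^2$. Gronwall then gives $\mathcal E(t)\le e^{C_1\varepsilon^2}\mathcal E(0)$ and $\int_0^\infty\mathcal D\le C_1e^{C_1\varepsilon^2}\mathcal E(0)$. Combining with the equivalence constants $\frac12,\frac32$ yields \eqref{eq:unif-dep} with constant $\sqrt3\,e^{C_1\varepsilon^2}$ (after renaming $C_1$), and \eqref{eq:unif-dep-diss}.
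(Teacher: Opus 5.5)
Your overall architecture is the paper's: a cross term at index $r-\beta$, equivalence constants $\frac12,\frac32$, and Gronwall with a coefficient built from the quantities in \eqref{eq:small-data-bound}. However, the step you yourself call the main obstacle is not resolved by what you propose.

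The dangerous terms are the zero-order parts of the $H^r$ inner products, where the difference carries no derivative: $\ipltwo{\du\cdot\nabla u\two}{\du}$, $\ipltwo{\du\cdot\nabla\tau\two}{\dtau}$, $\ipltwo{Q(\dtau,\nabla u\one)}{\dtau}$ and $\ipltwo{Q(\tau\two,\nabla\du)}{\dtau}$.
\begin{itemize}
\item Bounding $\Lpnorm{\nabla u\two}{\infty}\lesssim\Hsnorm{\nabla u\two}{s-\beta}$ leaves $\Hsnorm{\nabla u\two}{s-\beta}\Lpnorm{\du}{2}^2$. Since $\mathcal D$ only controls $\nabla\du$ and $\Lambda^\beta\dtau$, neither factor $\Lpnorm{\du}{2}$ (nor the low-frequency part of $\du$ in your splitting) can be placed in $\mathcal D^{1/2}$. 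So you cannot write this as $\Hsnorm{\nabla u\two}{s-\beta}\mathcal E^{1/2}\mathcal D^{1/2}$. The resulting Gronwall coefficient $\Hsnorm{\nabla u\two}{s-\beta}$ lies in $L^2(0,\infty)$, not $L^1(0,\infty)$.
\item Likewise $|\ipltwo{Q(\tau\two,\nabla\du)}{\dtau}|\le\Lpnorm{\tau\two}{\infty}\Lpnorm{\nabla\du}{2}\Lpnorm{\dtau}{2}$ has a coefficient that is merely bounded.
\end{itemize}
The missing idea is to transfer integrability onto the difference and put the reference solution in $L^2$, via Gagliardo--Nirenberg, $\Lpnorm{f}{4}\lesssim\Lpnorm{f}{2}^{1-d/4}\Lpnorm{\nabla f}{2}^{d/4}$. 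For example,
\[
|\ipltwo{\du\cdot\nabla u\two}{\du}|\le\Lpnorm{\nabla u\two}{2}\Lpnorm{\du}{4}^2\lesssim\Lpnorm{\nabla u\two}{2}\,\mathcal E^{1-d/4}\mathcal D^{d/4}.
\]
Young's inequality then produces the coefficient $\Lpnorm{\nabla u\two}{2}^{4/(4-d)}$. For small data this is at most $\Lpnorm{\nabla u\two}{2}^2$, hence integrable by \eqref{eq:small-data-bound}. For the $Q(\tau\two,\nabla\du)$ term, apply the same inequality to both $\tau\two$ and $\dtau$. In $d=2$ some such device is unavoidable, since $\dot H^1(\R^2)$ embeds in no $L^p$.

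Four further corrections:
\begin{itemize}
\item \textbf{Sign of the cross term.} With the equations as written, $\frac{d}{dt}\ipHs{\du}{\divg\dtau}{r-\beta}=\Hsnorm{\bbP\divg\dtau}{r-\beta}^2-\frac12\Hsnorm{\nabla\du}{r-\beta}^2+\cdots$. So the cross term must enter the functional with $+2\kappa$; with your $-2\kappa$ the velocity term has the wrong sign.
\item \textbf{Commutator terms.} Your Kato--Ponce bound with coefficient $(\Lpnorm{\nabla u\one}{\infty}+\Hsnorm{u\one}{s})\,\mathcal E$ is also non-integrable. Here, though, no frequency splitting is needed. Since $1\le r\le r+1-\beta$, the inequality $|\xi|^{2r}\le|\xi|^2+|\xi|^{2(r+1-\beta)}$ gives $\Lpnorm{\Lambda^r\du}{2}^2\le\Hsnorm{\nabla\du}{r-\beta}^2$, so one factor goes into $\mathcal D$. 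The real work is choosing exponents $\frac1p+\frac1q=\frac12$ with $\Lpnorm{\Lambda^r u\one}{p}\lesssim\Hsnorm{\nabla u\one}{s-\beta}$ and $\Lpnorm{\nabla\du}{q}\lesssim\Hsnorm{\nabla\du}{r-\beta}$. This is necessary because $\Lpnorm{\nabla\du}{\infty}$ is not controlled when $r-\beta\le\frac d2$.
\item \textbf{Minor points.}
  \begin{itemize}
  \item $s-\beta>\frac d2$ always holds under the hypotheses.
  \item The coefficient $\Hsnorm{\nabla\tau\two}{r}$ you propose for $\du\cdot\nabla\tau\two$ involves $s+2-2\beta$ derivatives of $\tau\two$. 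This exceeds $s+\beta$ when $\beta<\frac23$, so it is not controlled by \eqref{eq:small-data-bound}. After moving $\beta$ derivatives onto $\dtau$ it becomes $\Hsnorm{\nabla\tau\two}{r-\beta}\lesssim\Hsnorm{\Lambda^\beta\tau\two}{s}$.
  \end{itemize}
\item \textbf{Which term fixes $r$.} The term that actually dictates $r$ is $\ipltwo{\Lambda^r(\du\cdot\nabla u\two)}{\Lambda^r\du}$, not the $Q(\tau,\nabla\du)$ term. The latter is absorbable by the dissipation for any $r\le s$.
\end{itemize}
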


Since $\|\cdot\|_{H^r}\le\|\cdot\|_{H^s}$, the right-hand side of \eqref{eq:unif-dep} may of course be replaced by $\sqrt3\,e^{C_1\varepsilon^2}\,\|(u_0^{(1)}-u_0^{(2)},\tau^{(1)}_0-\tau^{(2)}_0)\|_{H^s}$. Interpolating \eqref{eq:unif-dep} with the uniform $H^s$ bound \eqref{eq:small-data-bound} yields uniform continuity, in fact H\"older continuity, into every $H^{s'}$ with $s'<s$.

\begin{corollary}\label{cor:holder}
	Under the assumptions of \Cref{thm:unif-dep}, let $r\le s'\le s$ and $\theta=\frac{s'-r}{s-r}\in[0,1]$ (with $\theta=0$ if $r=s$). Then for all $t\ge0$,
	\begin{equation*}
	\begin{split}
	&\bigl\|\bigl(u^{(1)}(t)-u^{(2)}(t),\,\tau^{(1)}(t)-\tau^{(2)}(t)\bigr)\bigr\|_{H^{s'}}\\
	&\qquad\le \bigl(2\sqrt{C_*}\,\varepsilon\bigr)^{\theta}\bigl(\sqrt3\,e^{C_1\varepsilon^2}\bigr)^{1-\theta}
	\bigl\|\bigl(u^{(1)}_0-u^{(2)}_0,\,\tau^{(1)}_0-\tau^{(2)}_0\bigr)\bigr\|_{H^r}^{1-\theta}.
	\end{split}
	\end{equation*}
	In particular, for every $s'<s$ the solution map is H\"older continuous of exponent $1-\theta=\frac{s-s'}{s-r}$ from the ball $\{\|(u_0,\tau_0)\|_{H^s}\le\varepsilon\}$, endowed with the $H^s$ (or even the $H^r$) topology, into $C_b([0,\infty);H^{s'})$.
\end{corollary}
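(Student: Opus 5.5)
The corollary follows by interpolating the Lipschitz bound \eqref{eq:unif-dep} with the uniform $H^s$ bound \eqref{eq:small-data-bound}; no new estimate on the equations is needed. We set $\delta u=u^{(1)}-u^{(2)}$ and $\delta\tau=\tau^{(1)}-\tau^{(2)}$. If $r=s$, i.e.\ $\beta=\frac12$, then $\theta=0$ and the claim is exactly \eqref{eq:unif-dep}. So we assume $r<s$.

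The first step is the interpolation inequality. For $r\le s'\le s$ with $s'=(1-\theta)r+\theta s$, Hölder's inequality on the Fourier side, applied to $(1+|\xi|^2)^{s'}|\hat f|^2=\bigl((1+|\xi|^2)^{r}|\hat f|^2\bigr)^{1-\theta}\bigl((1+|\xi|^2)^{s}|\hat f|^2\bigr)^{\theta}$ with exponents $\frac1{1-\theta}$ and $\frac1\theta$, gives
\[
\|f\|_{H^{s'}}\le\|f\|_{H^r}^{1-\theta}\|f\|_{H^s}^{\theta}.
\]
We need this for the pair norm. Apply the same Hölder step to the sum $\|\delta u\|_{H^{s'}}^2+\|\delta\tau\|_{H^{s'}}^2$, which is a single integral over two copies of $\R^d$. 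This yields
\[
\|(\delta u,\delta\tau)\|_{H^{s'}}\le\|(\delta u,\delta\tau)\|_{H^{r}}^{1-\theta}\|(\delta u,\delta\tau)\|_{H^{s}}^{\theta}.
\]

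The second step bounds each factor. For the $H^s$ factor, the triangle inequality in the pair norm and \eqref{eq:small-data-bound} for each solution give $\|(u^{(i)},\tau^{(i)})(t)\|_{H^s}\le\sqrt{C_*}\,\varepsilon$. Hence $\|(\delta u,\delta\tau)(t)\|_{H^s}\le2\sqrt{C_*}\,\varepsilon$. For the $H^r$ factor we use \eqref{eq:unif-dep} directly. Inserting both bounds gives the displayed estimate of the corollary for every $t\ge0$.

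For the Hölder continuity statement, the right-hand side is $C\|(\delta u_0,\delta\tau_0)\|_{H^r}^{1-\theta}$ with $C$ independent of $t$. The $H^r$ norm is dominated by the $H^s$ norm, so the bound holds with either topology on the data ball. Taking the supremum over $t\ge0$ gives Hölder continuity of exponent $1-\theta=\frac{s-s'}{s-r}$ into $L^\infty([0,\infty);H^{s'})$. It remains to check that the solution takes values in $C_b([0,\infty);H^{s'})$. Continuity in time into $H^{s'}$ follows because the solution lies in $C([0,\infty);H^s)$ and $H^s\hookrightarrow H^{s'}$ continuously. Boundedness follows from \eqref{eq:small-data-bound}.

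The only point needing care is the pair-norm interpolation; it holds since Hölder's inequality applies to the combined integral. This step poses no real obstacle.
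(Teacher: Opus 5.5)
Your argument is correct and is the paper's proof: interpolate the pair norm by H\"older's inequality on the Fourier side, bound the $H^s$ factor by $2\sqrt{C_*}\,\varepsilon$ using \eqref{eq:small-data-bound} and the triangle inequality, and bound the $H^r$ factor by \eqref{eq:unif-dep}. Treating the pair norm as a single integral is a detail the paper leaves implicit. One small adjustment is needed: the paper's norms \eqref{eq:norm-def} carry the weight $1+|\xi|^{2\rho}$ rather than $(1+|\xi|^2)^{\rho}$, so to keep the stated constants you should replace your pointwise factorization by the two-term H\"older bound $1+|\xi|^{2s'}\le(1+|\xi|^{2r})^{1-\theta}(1+|\xi|^{2s})^{\theta}$, which is exactly the paper's ``H\"older for sums of two terms'' step; after that, your integral H\"older step goes through unchanged.
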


\medskip
\noindent\textbf{Remarks on \Cref{thm:unif-dep}.}

\smallskip
\noindent\emph{(a) The setting in which the difference is measured.}
The solutions live in $H^s$, but the Lipschitz estimate \eqref{eq:unif-dep} is stated in $H^r$, $r=s-(2\beta-1)$, on both sides: the solution map is Lipschitz on the small $H^s$-ball \emph{with respect to the $H^r$-topology}, and there is no loss between the two sides of \eqref{eq:unif-dep}. This is the natural analogue of \eqref{eq:FK-stab}, where the data and the solutions are measured in the same space $\dot H^{\frac12}$ while the solutions themselves are more regular. The "loss" of $2\beta-1$ derivatives is the gap between the regularity $H^s$ in which the solutions exist and the regularity $H^r$ in which the map is Lipschitz. It vanishes when $\beta=\frac12$: at the endpoint of the global theory the solution map is Lipschitz in $H^s$ itself. This is consistent with the wave structure \eqref{eq:wave-nonlinear}: when $\beta=\frac12$ the damping $\eta|\xi|^{2\beta}$ and the wave speed $|\xi|$ balance at every frequency, and the hidden velocity dissipation $(-\Delta)^{1-\beta}=(-\Delta)^{\frac12}$ is the strongest possible.

\smallskip
\noindent\emph{(b) Uniformity in time and the explicit constant.}
The constant $\sqrt3\,e^{C_1\varepsilon^2}$ in \eqref{eq:unif-dep} is independent of $t$, and it tends to $\sqrt3$ as $\varepsilon\to0$; the factor $\sqrt3$ comes solely from the equivalence between the Lyapunov functional and the $H^r$-energy, while $e^{C_1\varepsilon^2}$ is a Gronwall factor whose exponent is the total dissipation budget $\int_0^\infty(\|\nabla u^{(i)}\|_{H^{s-\beta}}^2+\|\Lambda^\beta\tau^{(i)}\|_{H^s}^2)dt$ of the two reference solutions. Uniformity in time is the heart of the matter and is what distinguishes \Cref{thm:unif-dep} from a uniqueness or continuous-dependence statement on a bounded time interval, such as those in \cite{CWZZ}: a bound of the form $\|\nabla u^{(2)}\|_{L^\infty}\|\delta u\|_{L^2}^2$, which is all that is available for the Euler part of the system, produces a Gronwall coefficient that is bounded but \emph{not} integrable on $(0,\infty)$. Every coefficient in our energy inequality is instead a square of one of the $L^2_t$-integrable dissipation norms of \eqref{eq:small-data-bound}; the low-frequency parts of the difference, which no dissipation controls, are handled by a Gagliardo--Nirenberg interpolation that spreads the dissipation over both factors of the quadratic terms.

\smallskip
\noindent\emph{(c) Where the loss comes from.}
The index $r=s+1-2\beta$ is optimal for the energy method: it is dictated by the single term $(\Lambda^r(\delta u\cdot\nabla u^{(2)}),\Lambda^r\delta u)$, in which the reference velocity appears with one derivative. The velocity difference gains $1-\beta$ derivatives from the hidden dissipation and the reference velocity gains $1-\beta$ derivatives from \eqref{eq:small-data-bound}; the term therefore closes with a time-integrable Gronwall coefficient if and only if $r+1\le (s+1-\beta)+(1-\beta)$, that is, $r\le s+1-2\beta$ (see \Cref{rmk:optimal}). This is a statement about the method rather than about the solution map: we do not know whether the map is Lipschitz, or even uniformly continuous, in $H^{r'}$ for some $r'>r$, in particular in $H^s$ itself, when $\beta>\frac12$. A negative answer would require a construction in the spirit of \cite{AlexGerard,Inci} for the coupled system, where the stress dissipation competes with the transport mechanism.

\medskip
\noindent\textbf{Outline of the proof.}
Let $\delta u=u^{(1)}-u^{(2)}$ and $\delta\tau=\tau^{(1)}-\tau^{(2)}$. The linear part of the difference system is the same as that of \eqref{eq:OB_eta}, so $(\delta u,\bbP\nabla\cdot\delta\tau)$ obeys the damped wave equations \eqref{eq:wave-nonlinear} with nonlinearities that are bilinear in the difference and the reference solutions. To capture the dissipation that this structure transfers to $\delta u$, we introduce the Lyapunov functional
\[
\LL(t) = \Hsnorm{\delta u}{r}^2
+ \Hsnorm{\delta\tau}{r}^2
+ 2k\,(\delta u,\,\divg\,\delta\tau)_{H^{r-\beta}},\qquad 0<k\ll1 ,
\]
whose derivative contains, besides the stress dissipation $\eta\|\Lambda^\beta\delta\tau\|_{H^r}^2$, the velocity dissipation $\frac k2\|\nabla\delta u\|_{H^{r-\beta}}^2$ generated by the cross term. The index $r-\beta$ of the cross term is forced by two requirements: the term $-k\|\bbP\nabla\cdot\delta\tau\|_{H^{r-\beta}}^2$ produced by the cross term must be absorbed by the stress dissipation, and the term $k\eta(\Lambda^{2\beta}\delta\tau,\nabla\delta u)_{H^{r-\beta}}$ must be controlled by the two dissipations; both are possible precisely because $\frac12\le\beta\le1$. The thirteen nonlinear terms are then estimated in \Cref{sect:proof} so that each is either absorbed by the dissipation using the smallness of the reference solutions, or bounded by (a dissipation norm)$\times$(the energy)$^{1/2}\times$(a dissipation norm of a reference solution). The last type of bound, after Young's inequality, yields Gronwall coefficients that are integrable on $(0,\infty)$ by \eqref{eq:small-data-bound}, and Gronwall's lemma gives \eqref{eq:unif-dep} and \eqref{eq:unif-dep-diss} with the explicit constants. The only places where care is needed beyond standard commutator and product estimates are the low-frequency parts of the quadratic terms, where the difference appears without derivatives and cannot be controlled by any dissipation; these are handled uniformly in $d=2,3$ and $\frac12\le\beta\le1$ by the Gagliardo--Nirenberg inequality $\|f\|_{L^4}\lesssim\|f\|_{L^2}^{1-d/4}\|\nabla f\|_{L^2}^{d/4}$.

\medskip
\noindent\textbf{Organization of the paper.}
\Cref{sect:prelim} collects the notation and the analytic tools (embeddings, product and commutator estimates) with the precise index restrictions they require. \Cref{sect:proof} contains the proof of \Cref{thm:unif-dep} and of \Cref{cor:holder}, and ends with \Cref{rmk:optimal} on the optimality of $r$.

\vskip .2in
\section{Preliminaries}
\label{sect:prelim}
\setcounter{equation}{0}

Throughout the paper $d\in\{2,3\}$, $\frac12\le\beta\le1$, $s>1+\frac d2$, and
\[
r:=s-(2\beta-1)=s+1-2\beta,\qquad \sigma:=r-\beta=s+1-3\beta .
\]
We fix once and for all the following (equivalent) definition of the inhomogeneous Sobolev norm and inner product: for $\rho\ge0$,
\begin{equation}\label{eq:norm-def}
\Hsnorm{f}{\rho}^2:=\Lpnorm{f}{2}^2+\Lpnorm{\Lambda^\rho f}{2}^2,\qquad
\ipHs{f}{g}{\rho}:=\ipltwo{f}{g}+\ipltwo{\Lambda^\rho f}{\Lambda^\rho g}.
\end{equation}
For vector or matrix-valued functions the $L^2$ inner product is the sum over all components. With this convention, Plancherel's theorem and the elementary inequality $|\xi|^{2b}\le|\xi|^{2a}+|\xi|^{2c}$ for $0\le a\le b\le c$ give
\begin{equation}\label{eq:interp}
\Lpnorm{\Lambda^b f}{2}^2\le \Lpnorm{\Lambda^a f}{2}^2+\Lpnorm{\Lambda^c f}{2}^2,\qquad 0\le a\le b\le c,
\end{equation}
and in particular $\|\Lambda^bf\|_{L^2}\le\|f\|_{H^c}$ for $0\le b\le c$ and $\|\Lambda^b f\|_{L^2}\le \|\Lambda^a f\|_{H^{c-a}}$ for $a\le b\le c$. We shall repeatedly use the following consequences of $\frac12\le\beta\le1$ and $s>1+\frac d2$, all of which are immediate:
\begin{equation}\label{eq:indices}
\begin{gathered}
r>\tfrac d2,\qquad s-\beta>\tfrac d2,\qquad \sigma>0,\qquad
1\le r\le r+1-\beta,\\
\beta\le r-\beta+1\le r+\beta,\qquad
2\beta\le r+\beta,\qquad 0\le r+1-2\beta\le r .
\end{gathered}
\end{equation}
(For instance $r>1+\frac d2-2\beta+1\ge\frac d2$, $\sigma>\frac d2-1\ge0$, and $1-\beta\le\beta$.)
The Leray projection $\bbP=I-\nabla\Delta\inv\divg$ is a Fourier multiplier with matrix symbol $I-\xi\otimes\xi/|\xi|^2$; it is an orthogonal projection on $L^2$ and on every $H^\rho$ with the inner product \eqref{eq:norm-def}, it commutes with $\Lambda^\rho$, and $\bbP f=f$ when $\divg f=0$.

We collect the analytic tools in the next lemma. All constants denoted by $C$ or $C_0$ depend only on $d,s,\beta$ (never on $\eta$, $t$ or the solutions), and $A\lesssim B$ means $A\le C_0B$.

\begin{lemma}\label{lem:tools}
Let $d\in\{2,3\}$.
\begin{enumerate}[(i)]
\item (Sobolev) If $\rho>\frac d2$, then $\|f\|_{L^\infty}\lesssim\|f\|_{H^\rho}$ and $\|f\|_{L^p}\lesssim\|f\|_{H^\rho}$ for every $2\le p\le\infty$. If $0\le a<\frac d2$ and $\frac1p=\frac12-\frac ad$, then $\|f\|_{L^p}\lesssim\|\Lambda^a f\|_{L^2}$.
\item (Gagliardo--Nirenberg) $\|f\|_{L^4}\lesssim\|f\|_{L^2}^{1-\frac d4}\|\nabla f\|_{L^2}^{\frac d4}$.
\item (Products) If $\rho>\frac d2$ and $0\le\mu\le\rho$, then $\|fg\|_{H^\mu}\lesssim\|f\|_{H^\rho}\|g\|_{H^\mu}$; in particular $H^\rho$ is an algebra.
\item (Kato--Ponce commutator \cite{KatoPonce}) If $\rho\ge1$ and $2\le p,q\le\infty$ with $\frac1p+\frac1q=\frac12$, then
\[
\|[\Lambda^\rho,f\cdot\nabla]g\|_{L^2}\lesssim
\|\nabla f\|_{L^\infty}\|\Lambda^{\rho}g\|_{L^2}
+\|\Lambda^\rho f\|_{L^p}\|\nabla g\|_{L^q},
\]
where $[\Lambda^\rho,f\cdot\nabla]g=\Lambda^\rho(f\cdot\nabla g)-f\cdot\nabla\Lambda^\rho g$.
\end{enumerate}
\end{lemma}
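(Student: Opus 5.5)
The lemma collects standard facts, so the plan is to derive (i)–(iii) from Fourier analysis and the Hardy--Littlewood--Sobolev inequality, and to reduce (iv) to the classical Kato--Ponce commutator estimate.

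For (i), the first claim follows from $\|f\|_{L^\infty}\le(2\pi)^{-d}\|\hat f\|_{L^1}$ and Cauchy--Schwarz:
\[
\|\hat f\|_{L^1}\le\Bigl(\int(1+|\xi|^{2\rho})^{-1}d\xi\Bigr)^{1/2}\|f\|_{H^\rho},
\]
and the integral is finite precisely because $2\rho>d$. The $L^p$ bounds for $2\le p\le\infty$ then follow by interpolating between $L^2$ and $L^\infty$, since $\|f\|_{L^2}\le\|f\|_{H^\rho}$. For the homogeneous embedding, write $f=\Lambda^{-a}(\Lambda^a f)$, where $\Lambda^{-a}$ is the Riesz potential with kernel $c|x|^{a-d}$ for $0<a<\frac d2$. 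The Hardy--Littlewood--Sobolev inequality maps $L^2\to L^p$ with $\frac1p=\frac12-\frac ad$. The case $a=0$ is trivial.

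For (ii), apply (i) with $a=\frac d4<\frac d2$ to get $\|f\|_{L^4}\lesssim\|\Lambda^{d/4}f\|_{L^2}$. On the Fourier side, since $\frac d4\le1$, Hölder's inequality with exponents $\frac{1}{1-d/4}$ and $\frac{4}{d}$ gives
\[
\int|\xi|^{d/2}|\hat f|^2\,d\xi\le\Bigl(\int|\hat f|^2\,d\xi\Bigr)^{1-\frac d4}\Bigl(\int|\xi|^2|\hat f|^2\,d\xi\Bigr)^{\frac d4},
\]
which is the claim by Plancherel.

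For (iii), with the norm convention \eqref{eq:norm-def}, the $L^2$ part is handled by $\|fg\|_{L^2}\le\|f\|_{L^\infty}\|g\|_{L^2}$ together with (i). For the top-order part, use the fractional Leibniz rule (Kato--Ponce):
\[
\|\Lambda^\mu(fg)\|_{L^2}\lesssim\|f\|_{L^\infty}\|\Lambda^\mu g\|_{L^2}+\|\Lambda^\mu f\|_{L^p}\|g\|_{L^q},\qquad \tfrac1p+\tfrac1q=\tfrac12 .
\]
The first term is fine by (i). For the second, choose exponents according to the size of $\rho-\mu$:
\begin{itemize}
\item If $\rho-\mu>\frac d2$, take $p=\infty$, $q=2$, and use (i) applied to $\Lambda^\mu f\in H^{\rho-\mu}$.
\item If $\rho-\mu<\frac d2$, take $\frac1p=\frac12-\frac{\rho-\mu}{d}$, so that $\|\Lambda^\mu f\|_{L^p}\lesssim\|\Lambda^\rho f\|_{L^2}$ by (i). Then $\frac1q=\frac{\rho-\mu}{d}\in(0,\frac12)$, and $g\in L^q$ follows from $g\in H^\mu$ because $\frac1q\ge\frac12-\frac\mu d$, which is equivalent to $\rho\ge\frac d2$.
\item The borderline $\rho-\mu=\frac d2$ is handled by replacing $\rho$ with $\rho-\epsilon$ and using the strict inequality $\rho>\frac d2$.
\end{itemize}
This case bookkeeping, and checking that it covers all $0\le\mu\le\rho$, is the only genuinely fiddly point. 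If it becomes awkward, an alternative is a Bony paraproduct decomposition.

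For (iv), I would simply invoke the Kato--Ponce commutator estimate \cite{KatoPonce}:
\[
\|[\Lambda^\rho,f]h\|_{L^2}\lesssim\|\nabla f\|_{L^\infty}\|\Lambda^{\rho-1}h\|_{L^2}+\|\Lambda^\rho f\|_{L^p}\|h\|_{L^q}.
\]
Apply it componentwise with $h=\partial_jg$, summing over $j$ as in $f\cdot\nabla g=\sum_j f_j\partial_jg$, and note that $\|\Lambda^{\rho-1}\nabla g\|_{L^2}\simeq\|\Lambda^\rho g\|_{L^2}$ by Plancherel.
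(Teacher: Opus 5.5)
Your proposal is correct and agrees with the paper, which gives no proof of this lemma: it treats (i)--(iii) as standard facts and cites Kato--Ponce for (iv). Your sketches are the standard arguments: Cauchy--Schwarz on $\hat f$ and Hardy--Littlewood--Sobolev for (i), Fourier-side H\"older for (ii), the fractional Leibniz rule with the case split on $\rho-\mu$ for (iii), and the reduction $[\Lambda^\rho,f\cdot\nabla]g=\sum_j[\Lambda^\rho,f_j]\partial_jg$ to the cited commutator estimate for (iv). The only slip is in (iii), where $\frac1q=\frac{\rho-\mu}{d}\in[0,\frac12)$ rather than $(0,\frac12)$, with $q=\infty$ when $\mu=\rho$; this is harmless, since then $\|g\|_{L^\infty}\lesssim\|g\|_{H^\rho}$ by (i).
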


In the commutator estimate we shall use one fixed pair of exponents, chosen so that $\Lambda^r$ of a reference velocity can be placed in $L^2_t$ through \eqref{eq:small-data-bound} and $\nabla$ of the difference can be placed in the hidden dissipation $H^\sigma$. Set
\begin{equation}\label{eq:ab}
b:=\min\Bigl\{\sigma,\,\tfrac{d-1}{2}\Bigr\},\qquad a:=\tfrac d2-b,\qquad
\tfrac1p:=\tfrac12-\tfrac ad,\qquad \tfrac1q:=\tfrac12-\tfrac bd .
\end{equation}
Then $0<b<\frac d2$, $a=\max\{\frac d2-\sigma,\frac12\}$ satisfies $\frac12\le a\le\beta$ (because $\frac d2-\sigma<\beta$ is equivalent to $r>\frac d2$) and $a<\frac d2$ (for $d=2$ this is $a<1$, which holds because $\sigma>0$), so that $2<p,q<\infty$, $\frac1p+\frac1q=\frac12$, and by \Cref{lem:tools}(i) and \eqref{eq:interp},
\begin{equation}\label{eq:pq-use}
\|\Lambda^r f\|_{L^p}\lesssim\|\Lambda^{r+a}f\|_{L^2}\le\|\nabla f\|_{H^{s-\beta}},\qquad
\|\nabla g\|_{L^q}\lesssim\|\Lambda^{b}\nabla g\|_{L^2}\le\|\nabla g\|_{H^{\sigma}},
\end{equation}
where we used $1\le r+a\le r+\beta=s+1-\beta$ and $0\le b\le\sigma$.

\vskip .2in
\section{Proof of \Cref{thm:unif-dep}}
\label{sect:proof}
\setcounter{equation}{0}

Let $(u^{(1)},\tau^{(1)})$ and $(u^{(2)},\tau^{(2)})$ be the solutions of \Cref{prop:small-data} with $\|u^{(i)}_0\|_{H^s}+\|\tau^{(i)}_0\|_{H^s}\le\varepsilon$. We write $\du=u\one-u\two$, $\dtau=\tau\one-\tau\two$, $\delta p=p\one-p\two$, and introduce the energy and dissipation of the difference,
\[
\EE:=\Hsnorm{\du}{r}^2+\Hsnorm{\dtau}{r}^2,\qquad
\YY:=\Hsnorm{\nabla\du}{\sigma}^2,\qquad
\ZZd:=\Hsnorm{\Lambda^\beta\dtau}{r}^2,\qquad
\DD:=\YY+\ZZd,
\]
and the quantities controlled by \Cref{prop:small-data},
\[
M:=\sup_{t\ge0}\sum_{i=1}^2\bigl(\|u^{(i)}(t)\|_{H^s}+\|\tau^{(i)}(t)\|_{H^s}\bigr),\qquad
F(t):=\sum_{i=1}^2\Bigl(\|\nabla u^{(i)}(t)\|_{H^{s-\beta}}^2+\|\Lambda^\beta\tau^{(i)}(t)\|_{H^s}^2\Bigr),
\]
so that, by \eqref{eq:small-data-bound},
\begin{equation}\label{eq:MF}
M\le 2\sqrt{2C_*}\,\varepsilon,\qquad \int_0^\infty F(t)\,dt\le 2C_*\varepsilon^2 .
\end{equation}
By \eqref{eq:interp} and \eqref{eq:indices}, the dissipation norms control the following quantities, which is how they will enter the estimates:
\begin{equation}\label{eq:diss-control}
\begin{gathered}
\Lpnorm{\nabla\du}{2}^2,\ \Lpnorm{\Lambda^r\du}{2}^2,\ \Lpnorm{\Lambda^{r+1-\beta}\du}{2}^2\le\YY,\qquad
\Lpnorm{\nabla\dtau}{2}^2,\ \Lpnorm{\Lambda^r\dtau}{2}^2,\ \Lpnorm{\Lambda^{r+\beta}\dtau}{2}^2\le\ZZd,\\
\Hsnorm{\nabla\dtau}{\sigma}^2\le2\ZZd,\qquad \Hsnorm{\Lambda^{2\beta}\dtau}{\sigma}^2\le2\ZZd,
\end{gathered}
\end{equation}
and likewise, for the reference solutions ($i=1,2$),
\begin{equation}\label{eq:ref-control}
\begin{gathered}
\Lpnorm{\nabla u^{(i)}}{2}\le\min\{M,\|\nabla u^{(i)}\|_{H^{s-\beta}}\},\qquad
\Lpnorm{\nabla \tau^{(i)}}{2}\le\min\{M,\|\Lambda^\beta \tau^{(i)}\|_{H^{s}}\},\\
\Hsnorm{\nabla\tau^{(i)}}{s-\beta}^2\le 2\|\Lambda^\beta\tau^{(i)}\|_{H^s}^2 ,
\end{gathered}
\end{equation}
where for the last inequality we used $\beta\le1\le s+\beta$ and $\beta\le s+1-\beta\le s+\beta$. Finally, since $s-\beta>\frac d2$, $r>\frac d2$ and $r\le s$, \Cref{lem:tools}(i) gives
\begin{equation}\label{eq:Linfty}
\|\nabla u^{(i)}\|_{L^\infty}\lesssim\|\nabla u^{(i)}\|_{H^{s-\beta}},\qquad
\|u^{(i)}\|_{H^r}+\|\tau^{(i)}\|_{H^r}\le M,\qquad
\|\du\|_{L^\infty}+\|\dtau\|_{L^\infty}\lesssim\EE^{\frac12}.
\end{equation}

\subsection{The difference system and the energy identities}
The pair $(\du,\dtau)$ satisfies
\begin{equation}\label{diff}
\begin{cases}
\partial_t \du+u^{(1)}\cdot\nabla \du+\du\cdot\nabla u^{(2)}+\nabla \delta p= \nabla\cdot \dtau,\\
\partial_t \dtau +u^{(1)}\cdot\nabla\dtau+\du \cdot\nabla\tau^{(2)}+\eta\Lambda^{2\beta}\dtau+Q(\tau^{(1)},\nabla\du)+Q(\dtau,\nabla u^{(2)})=D(\du),\\
\nabla\cdot\du=0,\\
\du(x,0)=u^{(1)}_0-u_0^{(2)},\quad \dtau(x,0)=\tau^{(1)}_0-\tau_0^{(2)}.
\end{cases}
\end{equation}
We use three cancellations. Since $\divg u\one=\divg\du=0$, $(u\one\cdot\nabla f,f)_{L^2}=0$ and $(\nabla\delta p,\du)_{L^2}=0$; and since $\dtau$ is symmetric,
\begin{equation}\label{eq:cancel}
\ipltwo{\nabla\cdot\dtau}{\du}+\ipltwo{D(\du)}{\dtau}
=-\ipltwo{\dtau}{\nabla\du}+\ipltwo{\dtau}{D(\du)}=0,
\end{equation}
where $(\nabla\du)_{ij}=\partial_i\du_j$. The same identities hold after applying $\Lambda^r$ to \eqref{diff}, because $\Lambda^r$ commutes with $\nabla$ and preserves symmetry and the divergence-free condition. Taking the $L^2$ inner product of \eqref{diff} with $(\du,\dtau)$ and of $\Lambda^r$\eqref{diff} with $(\Lambda^r\du,\Lambda^r\dtau)$, and adding, we obtain
\begin{equation}
\label{eq:energy}
\frac{1}{2}\frac{d}{dt}\EE+\eta\,\ZZd=\sum_{i=1}^{6}J_i ,
\end{equation}
where
\begin{align*}
&J_1 = -\bigl(\Lambda^r(u^{(1)}\cdot\nabla\du),\,\Lambda^r\du\bigr)_{L^2}
=-\bigl([\Lambda^r,u\one\cdot\nabla]\du,\,\Lambda^r\du\bigr)_{L^2},\\
&J_2= -\bigl(\Lambda^r(u^{(1)}\cdot\nabla\dtau),\,\Lambda^r\dtau\bigr)_{L^2}
=-\bigl([\Lambda^r,u\one\cdot\nabla]\dtau,\,\Lambda^r\dtau\bigr)_{L^2},\\
&J_3= -\ipHs{\du\cdot\nabla u^{(2)}}{\du}{r},\qquad
J_4= -\ipHs{\du\cdot\nabla\tau^{(2)}}{\dtau}{r}, \\
&J_5= -\ipHs{Q(\tau^{(1)},\nabla\du)}{\dtau}{r},\qquad
J_6= -\ipHs{Q(\dtau,\nabla u^{(2)})}{\dtau}{r}.
\end{align*}

Next we exploit the wave structure \eqref{eq:wave-nonlinear} to generate a dissipative term for $\du$, by computing the time derivative of the cross term $(\du,\nabla\cdot\dtau)_{H^\sigma}$. Since $\partial_t\du$ is divergence-free and $\bbP$ is an orthogonal projection on $H^\sigma$,
\[
\ipHs{\partial_t\du}{\nabla\cdot\dtau}{\sigma}=\ipHs{\partial_t\du}{\bbP\nabla\cdot\dtau}{\sigma},
\]
and applying $\bbP$ to the first equation in \eqref{diff} gives $\partial_t\du=\bbP\nabla\cdot\dtau-\bbP(u\one\cdot\nabla\du)-\bbP(\du\cdot\nabla u\two)$. On the other hand, integrating by parts, $(\du,\nabla\cdot\partial_t\dtau)_{H^\sigma}=-(\nabla\du,\partial_t\dtau)_{H^\sigma}$, and since $\divg\du=0$,
\[
\ipHs{\nabla\du}{D(\du)}{\sigma}=\tfrac12\Hsnorm{\nabla\du}{\sigma}^2+\tfrac12\ipHs{\partial_i\du_j}{\partial_j\du_i}{\sigma}
=\tfrac12\Hsnorm{\nabla\du}{\sigma}^2+\tfrac12\Hsnorm{\divg\du}{\sigma}^2=\tfrac12\Hsnorm{\nabla\du}{\sigma}^2 .
\]
Using the second equation in \eqref{diff} we therefore find
\begin{equation}
\label{eq:cross}
\begin{split}
&\frac{d}{dt}\ipHs{\du}{\divg\dtau}{\sigma} +\frac12\Hsnorm{\nabla\du}{\sigma}^2- \Hsnorm{\bbP \divg\dtau}{\sigma}^2 \\
&\quad=-\ipHs{\bbP(u^{(1)}\cdot\nabla\du)}{\divg\dtau}{\sigma}
-\ipHs{\bbP(\du\cdot\nabla u^{(2)})}{\divg\dtau}{\sigma}
+\ipHs{u^{(1)}\cdot\nabla\dtau}{\nabla\du}{\sigma}\\
&\qquad +\ipHs{\du\cdot\nabla\tau^{(2)}}{\nabla\du}{\sigma}
+\ipHs{Q(\tau^{(1)},\nabla\du)}{\nabla\du}{\sigma}
+\ipHs{Q(\dtau,\nabla u^{(2)})}{\nabla\du}{\sigma}\\
&\qquad +\eta\ipHs{\Lambda^{2\beta}\dtau}{\nabla\du}{\sigma}.
\end{split}
\end{equation}
The two terms $\frac12\|\nabla\du\|_{H^\sigma}^2$ and $-\|\bbP\nabla\cdot\dtau\|_{H^\sigma}^2$ on the left of \eqref{eq:cross} are the difference-level manifestation of the wave structure: the first is the hidden dissipation of $\du$, the second must be paid for with the stress dissipation. For $k>0$ to be chosen, define the Lyapunov functional
\[
\LL(t):=\Hsnorm{\du}{r}^2+\Hsnorm{\dtau}{r}^2+2k\ipHs{\du}{\divg\dtau}{\sigma}.
\]
Then \eqref{eq:energy}$+k\times$\eqref{eq:cross} reads
\begin{equation}
\label{eq:energy-deriv}
\frac12\frac{d}{dt}\LL +\eta\,\ZZd- k\Hsnorm{\bbP\divg\dtau}{\sigma}^2+\frac k2\YY = \sum_{i=1}^{13}J_i,
\end{equation}
where $J_1,\dots,J_6$ are as above and
\begin{align*}
&J_7= -k\ipHs{\bbP(u^{(1)}\cdot\nabla\du)}{\divg\dtau}{\sigma},\qquad
J_8= -k\ipHs{\bbP(\du\cdot\nabla u^{(2)})}{\divg\dtau}{\sigma}, \\
&J_9= k\ipHs{u^{(1)}\cdot\nabla\dtau}{\nabla\du}{\sigma},\qquad\qquad\ \
J_{10}= k\ipHs{\du\cdot\nabla\tau^{(2)}}{\nabla\du}{\sigma},\\
&J_{11}= k\ipHs{Q(\tau^{(1)},\nabla\du)}{\nabla\du}{\sigma}, \qquad\quad\
J_{12}= k\ipHs{Q(\dtau,\nabla u^{(2)})}{\nabla\du}{\sigma}, \\
&J_{13}= k\eta\ipHs{\Lambda^{2\beta}\dtau}{\nabla\du}{\sigma}.
\end{align*}
Two features of \eqref{eq:energy-deriv} are worth recording before we estimate the $J_i$. First, since $\bbP$ has norm one and $\|\nabla\dtau\|_{H^\sigma}^2\le2\ZZd$ by \eqref{eq:diss-control},
\begin{equation}\label{eq:Pdiv}
k\Hsnorm{\bbP\divg\dtau}{\sigma}^2\le 2k\,\ZZd ,
\end{equation}
which is where $\beta\ge\frac12$ enters (through $r-\beta+1\le r+\beta$). Second, by \eqref{eq:interp} and \eqref{eq:indices},
\begin{equation}\label{eq:cross-small}
\begin{split}
\bigl|2k\ipHs{\du}{\divg\dtau}{\sigma}\bigr|
&=2k\bigl|\ipltwo{\du}{\divg\dtau}+\ipltwo{\Lambda^r\du}{\Lambda^{r-2\beta}\divg\dtau}\bigr|\\
&\le 2k\Hsnorm{\du}{r}\bigl(\Lpnorm{\nabla\dtau}{2}^2+\Lpnorm{\Lambda^{r+1-2\beta}\dtau}{2}^2\bigr)^{\frac12}
\le 2\sqrt2\,k\Hsnorm{\du}{r}\Hsnorm{\dtau}{r},
\end{split}
\end{equation}
so that, as soon as $k\le\frac14$,
\begin{equation}\label{eq:equiv}
\tfrac12\,\EE\le\LL\le\tfrac32\,\EE .
\end{equation}

\subsection{Estimates of the nonlinear terms}
We now bound $J_1,\dots,J_{13}$. Each bound is of one of three types: (A) $\lesssim M\,\DD$, absorbed by the dissipation once $\varepsilon$ is small; (B) $\lesssim G\,\EE^{1-\gamma}\DD^{\gamma}$ with $\gamma\in(0,1)$ and a coefficient $G$ built from the reference solutions, such that after Young's inequality the resulting coefficient of $\EE$ is bounded by a multiple of $F$, hence integrable on $(0,\infty)$; (C) $\le\sqrt2\,k\eta\,\YY^{\frac12}\ZZd^{\frac12}$, the linear term $J_{13}$. Throughout, $C_0$ denotes a constant depending only on $d,s,\beta$.

\medskip\noindent\emph{The commutator terms $J_1$, $J_2$.}
By \Cref{lem:tools}(iv) with the exponents \eqref{eq:ab}, then \eqref{eq:Linfty}, \eqref{eq:pq-use} and \eqref{eq:diss-control},
\begin{align*}
|J_1| &\lesssim \bigl(\Lpnorm{\nabla u^{(1)}}{\infty}\Lpnorm{\Lambda^r \du}{2} + \Lpnorm{\Lambda^r u^{(1)}}{p}\Lpnorm{\nabla \du}{q} \bigr) \Lpnorm{\Lambda^r \du}{2}\\
&\lesssim \Hsnorm{\nabla u\one}{s-\beta}\,\YY^{\frac12}\,\Lpnorm{\Lambda^r \du}{2}
\lesssim \Hsnorm{\nabla u\one}{s-\beta}\,\EE^{\frac12}\YY^{\frac12}.
\end{align*}
Note that we placed \emph{one} factor $\|\Lambda^r\du\|_{L^2}$ in the dissipation $\YY$ (which is possible since $1\le r\le r+1-\beta$) and the other in the energy; this is what makes the coefficient $\|\nabla u\one\|_{H^{s-\beta}}$ appear to the first power in a bound of type (B), rather than as a coefficient of $\EE$ alone, which would not be integrable in time. In the same way, using $\|\Lambda^r\dtau\|_{L^2}^2\le\ZZd$ and $\|\Lambda^b\nabla\dtau\|_{L^2}^2\le\ZZd$ (recall $\beta\le1+b\le r+\beta$ since $b\le\sigma$),
\begin{align*}
|J_2| &\lesssim \bigl(\Lpnorm{\nabla u\one}{\infty}\Lpnorm{\Lambda^r \dtau}{2} + \Lpnorm{\Lambda^r u\one}{p}\Lpnorm{\nabla \dtau}{q} \bigr) \Lpnorm{\Lambda^r \dtau}{2}\\
&\lesssim \Hsnorm{\nabla u\one}{s-\beta}\,\EE^{\frac12}\ZZd^{\frac12}.
\end{align*}

\medskip\noindent\emph{The terms $J_3$--$J_6$: high-frequency parts.}
We split $J_i=J_{i,\mathrm{low}}+J_{i,\mathrm{high}}$, $i=3,\dots,6$, according to the two inner products in \eqref{eq:norm-def}. For $J_{3,\mathrm{high}}$ we move $1-\beta$ derivatives onto the second factor, where they are paid for by $\YY$, and use the algebra property of $H^{s-\beta}=H^{r+\beta-1}$ (\Cref{lem:tools}(iii), $s-\beta>\frac d2$) together with $r+\beta-1\le r$:
\begin{align*}
|J_{3,\mathrm{high}}| &=\bigl|\ipltwo{\Lambda^{r+\beta-1}(\du\cdot\nabla u^{(2)})}{\Lambda^{r+1-\beta} \du}\bigr|\\
&\lesssim \Hsnorm{\du}{s-\beta}\Hsnorm{\nabla u\two}{s-\beta}\Lpnorm{\Lambda^{r+1-\beta} \du}{2}
\lesssim \Hsnorm{\nabla u\two}{s-\beta}\,\EE^{\frac12}\YY^{\frac12}.
\end{align*}
For $J_{4,\mathrm{high}}$, $J_{5,\mathrm{high}}$ and $J_{6,\mathrm{high}}$ we move $\beta$ derivatives onto $\dtau$ and use \Cref{lem:tools}(iii) with $\rho=r$, $\mu=\sigma$ (note $r>\frac d2$ and $0<\sigma\le r$), \eqref{eq:ref-control} and \eqref{eq:Linfty}:
\begin{align*}
|J_{4,\mathrm{high}}| &\le \Hsnorm{\du\cdot\nabla\tau\two}{\sigma}\Lpnorm{\Lambda^{r+\beta}\dtau}{2}
\lesssim \Hsnorm{ \du}{r}\Hsnorm{\nabla\tau^{(2)}}{\sigma}\ZZd^{\frac12}
\lesssim \Hsnorm{\Lambda^\beta\tau\two}{s}\,\EE^{\frac12}\ZZd^{\frac12},\\
|J_{5,\mathrm{high}}| &\le \Hsnorm{Q(\tau\one,\nabla\du)}{\sigma}\Lpnorm{\Lambda^{r+\beta}\dtau}{2}
\lesssim \Hsnorm{\tau^{(1)}}{r}\Hsnorm{\nabla\du}{\sigma}\ZZd^{\frac12}
\le C_0M\,\YY^{\frac12}\ZZd^{\frac12},\\
|J_{6,\mathrm{high}}| &\le \Hsnorm{Q(\dtau,\nabla u\two)}{\sigma}\Lpnorm{\Lambda^{r+\beta}\dtau}{2}
\lesssim \Hsnorm{\dtau}{r}\Hsnorm{\nabla u^{(2)}}{\sigma}\ZZd^{\frac12}
\lesssim \Hsnorm{\nabla u\two}{s-\beta}\,\EE^{\frac12}\ZZd^{\frac12},
\end{align*}
where we used that $Q$ is bilinear with constant coefficients, so that $\|Q(f,\nabla g)\|_{H^\mu}\lesssim\|f\,\nabla g\|_{H^\mu}$ componentwise, and $\sigma\le s-\beta$.

\medskip\noindent\emph{The terms $J_3$--$J_6$: low-frequency parts.}
The low-frequency parts are the only place where the difference appears without any derivative, and hence cannot be placed in $\YY$ or $\ZZd$ directly; in dimension two this is a genuine obstruction to a uniform-in-time estimate, since $\dot H^1(\R^2)\not\hookrightarrow L^\infty$. We use instead the Gagliardo--Nirenberg inequality of \Cref{lem:tools}(ii), which spreads the dissipation over the two factors of the difference, and place the reference solution in $L^2$, where it is controlled by the dissipation norms through \eqref{eq:ref-control}. With $\gamma_d:=\frac d4\in\{\frac12,\frac34\}$,
\begin{align*}
|J_{3,\mathrm{low}}|&=\bigl|\ipltwo{\du\cdot\nabla u\two}{\du}\bigr|
\le\Lpnorm{\nabla u\two}{2}\Lpnorm{\du}{4}^2\\
&\lesssim\Lpnorm{\nabla u\two}{2}\Lpnorm{\du}{2}^{2-2\gamma_d}\Lpnorm{\nabla\du}{2}^{2\gamma_d}
\le C_0\Lpnorm{\nabla u\two}{2}\,\EE^{1-\gamma_d}\YY^{\gamma_d},\\
|J_{4,\mathrm{low}}|&=\bigl|\ipltwo{\du\cdot\nabla \tau\two}{\dtau}\bigr|
\le\Lpnorm{\nabla \tau\two}{2}\Lpnorm{\du}{4}\Lpnorm{\dtau}{4}\\
&\lesssim\Lpnorm{\nabla \tau\two}{2}\,\EE^{1-\gamma_d}(\YY\ZZd)^{\frac{\gamma_d}{2}}
\le C_0\Lpnorm{\nabla \tau\two}{2}\,\EE^{1-\gamma_d}\DD^{\gamma_d},\\
|J_{6,\mathrm{low}}|&=\bigl|\ipltwo{Q(\dtau,\nabla u\two)}{\dtau}\bigr|
\lesssim\Lpnorm{\nabla u\two}{2}\Lpnorm{\dtau}{4}^2
\le C_0\Lpnorm{\nabla u\two}{2}\,\EE^{1-\gamma_d}\ZZd^{\gamma_d},\\
|J_{5,\mathrm{low}}|&=\bigl|\ipltwo{Q(\tau\one,\nabla\du)}{\dtau}\bigr|
\lesssim\Lpnorm{\tau\one}{4}\Lpnorm{\nabla\du}{2}\Lpnorm{\dtau}{4}\\
&\lesssim\Lpnorm{\tau\one}{2}^{1-\gamma_d}\Lpnorm{\nabla\tau\one}{2}^{\gamma_d}\,
\YY^{\frac12}\,\EE^{\frac{1-\gamma_d}2}\ZZd^{\frac{\gamma_d}2}\\
&\le C_0 M^{1-\gamma_d}\Lpnorm{\nabla\tau\one}{2}^{\gamma_d}\,\EE^{\frac{1-\gamma_d}2}\DD^{\frac{1+\gamma_d}{2}},
\end{align*}
where we used $\|\nabla\du\|_{L^2}^2\le\YY$, $\|\nabla\dtau\|_{L^2}^2\le\ZZd$ and $\sqrt{\YY\ZZd}\le\DD$. When $d=2$ and $\beta=1$ the bounds for $J_{4,\mathrm{low}}$ and $J_{5,\mathrm{low}}$ are the classical Ladyzhenskaya estimates; the point of the formulation above is that it treats $d=2,3$ and $\frac12\le\beta\le1$ simultaneously and never uses $\|\nabla u\two\|_{L^\infty}$ or $\|\tau\one\|_{L^\infty}$ as a coefficient of the energy.

\medskip\noindent\emph{The cross terms $J_7$--$J_{12}$.}
These are bounded directly in $H^\sigma$ by \Cref{lem:tools}(iii) (with $\rho=r$, $\mu=\sigma$), \eqref{eq:diss-control}, \eqref{eq:ref-control} and \eqref{eq:Linfty}; the projection $\bbP$ is dropped by moving it to the other factor and using $\|\bbP\|\le1$:
\begin{align*}
|J_7| &\le k\Hsnorm{u\one\cdot\nabla\du}{\sigma}\Hsnorm{\nabla\dtau}{\sigma}
\lesssim k\Hsnorm{u\one}{r}\Hsnorm{\nabla\du}{\sigma}\ZZd^{\frac12}\le C_0kM\,\YY^{\frac12}\ZZd^{\frac12},\\
|J_8| &\le k\Hsnorm{\du\cdot\nabla u\two}{\sigma}\Hsnorm{\nabla\dtau}{\sigma}
\lesssim k\Hsnorm{\du}{r}\Hsnorm{\nabla u\two}{\sigma}\ZZd^{\frac12}\le C_0k\Hsnorm{\nabla u\two}{s-\beta}\,\EE^{\frac12}\ZZd^{\frac12},\\
|J_9| &\le k\Hsnorm{u\one\cdot\nabla\dtau}{\sigma}\Hsnorm{\nabla\du}{\sigma}
\lesssim k\Hsnorm{u\one}{r}\Hsnorm{\nabla\dtau}{\sigma}\YY^{\frac12}\le C_0kM\,\YY^{\frac12}\ZZd^{\frac12},\\
|J_{10}| &\le k\Hsnorm{\du\cdot\nabla\tau\two}{\sigma}\Hsnorm{\nabla\du}{\sigma}
\lesssim k\Hsnorm{\du}{r}\Hsnorm{\nabla\tau\two}{\sigma}\YY^{\frac12}\le C_0k\Hsnorm{\Lambda^\beta\tau\two}{s}\,\EE^{\frac12}\YY^{\frac12},\\
|J_{11}| &\le k\Hsnorm{Q(\tau\one,\nabla\du)}{\sigma}\Hsnorm{\nabla\du}{\sigma}
\lesssim k\Hsnorm{\tau\one}{r}\YY\le C_0kM\,\YY,\\
|J_{12}| &\le k\Hsnorm{Q(\dtau,\nabla u\two)}{\sigma}\Hsnorm{\nabla\du}{\sigma}
\lesssim k\Hsnorm{\dtau}{r}\Hsnorm{\nabla u\two}{\sigma}\YY^{\frac12}\le C_0k\Hsnorm{\nabla u\two}{s-\beta}\,\EE^{\frac12}\YY^{\frac12}.
\end{align*}

\medskip\noindent\emph{The linear term $J_{13}$.}
By \eqref{eq:diss-control}, $\|\Lambda^{2\beta}\dtau\|_{H^\sigma}^2\le2\ZZd$ (this uses $\beta\le2\beta\le r+\beta$), hence
\[
|J_{13}|\le k\eta\Hsnorm{\Lambda^{2\beta}\dtau}{\sigma}\Hsnorm{\nabla\du}{\sigma}\le\sqrt2\,k\eta\,\ZZd^{\frac12}\YY^{\frac12}
\le\frac{\eta}{8}\ZZd+4k^2\eta\,\YY .
\]

\subsection{Choice of $k$ and $\varepsilon_0$, absorption, and Gronwall's inequality}
We fix
\begin{equation}\label{eq:k}
k:=\tfrac1{32}\min\{\eta,\ \eta\inv,\ 1\},
\end{equation}
so that $k\le\frac14$, $2k\le\frac\eta4$ and $4k^2\eta\le\frac k8$. Then \eqref{eq:Pdiv} and the bound on $J_{13}$ turn \eqref{eq:energy-deriv} into
\begin{equation}\label{eq:step1}
\frac12\frac{d}{dt}\LL+\frac{5\eta}{8}\ZZd+\frac{3k}{8}\YY\le\sum_{i=1}^{12}|J_i| .
\end{equation}
Since $k\le\eta$, we have $\frac{\eta}{8}\ZZd+\frac k8\YY\ge\frac k8\DD$. We shall bound $\sum_{i\le12}|J_i|$ by $\frac{k}{8}\DD$ plus an integrable multiple of $\EE$, allotting $\frac{k}{24}\DD$ to each of the three groups below.

\smallskip\noindent\emph{Type (A).}
The terms $J_{5,\mathrm{high}},J_7,J_9,J_{11}$ are bounded by $C_0M(1+2k)\YY^{\frac12}\ZZd^{\frac12}+C_0kM\YY\le 2C_0M\,\DD$. We now fix
\begin{equation}\label{eq:eps0}
\varepsilon_0:=\min\Bigl\{\varepsilon_*,\ \frac{k}{96\,C_0\sqrt{2C_*}},\ \frac{1}{2\sqrt{2C_*}}\Bigr\},
\end{equation}
so that, by \eqref{eq:MF}, $2C_0M\le\frac{k}{24}$ and $M\le1$ whenever $\varepsilon\le\varepsilon_0$. Hence the type (A) terms are bounded by $\frac k{24}\DD$.

\smallskip\noindent\emph{Type (B), exponent $\frac12$.}
The remaining terms among $J_1,J_2,J_{3,\mathrm{high}},J_{4,\mathrm{high}},J_{6,\mathrm{high}},J_8,J_{10},J_{12}$ are all of the form $C_0G\,\EE^{\frac12}\DD^{\frac12}$ with $G\in\{\|\nabla u^{(i)}\|_{H^{s-\beta}},\|\Lambda^\beta\tau^{(i)}\|_{H^s}\}$, so $G^2\le F$. By Young's inequality ($C_0G\EE^{\frac12}\DD^{\frac12}\le\frac{k}{192}\DD+\frac{48C_0^2}{k}G^2\EE$ for each of the eight terms) their sum is bounded by
\[
\frac k{24}\DD+\frac{C_0}{k}\,F\,\EE .
\]

\smallskip\noindent\emph{Type (B), other exponents.}
For $J_{3,\mathrm{low}},J_{4,\mathrm{low}},J_{6,\mathrm{low}}$ we have $C_0G_0\EE^{1-\gamma_d}\DD^{\gamma_d}$ with $G_0\in\{\|\nabla u\two\|_{L^2},\|\nabla\tau\two\|_{L^2}\}$, and Young's inequality with exponents $\frac1{\gamma_d}$ and $\frac1{1-\gamma_d}$ gives, for $\delta>0$,
\[
C_0G_0\EE^{1-\gamma_d}\DD^{\gamma_d}\le\delta\DD+C\,\delta^{-\frac{\gamma_d}{1-\gamma_d}}\,G_0^{\frac{1}{1-\gamma_d}}\EE .
\]
Here $\frac1{1-\gamma_d}=2$ if $d=2$ and $=4$ if $d=3$; in the latter case $G_0^4\le M^2G_0^2\le G_0^2$ by \eqref{eq:ref-control}, and in both cases $G_0^2\le F$. For $J_{5,\mathrm{low}}$ we have $C_0M^{1-\gamma_d}\|\nabla\tau\one\|_{L^2}^{\gamma_d}\EE^{\frac{1-\gamma_d}{2}}\DD^{\frac{1+\gamma_d}2}$ and Young's inequality with exponents $\frac{2}{1+\gamma_d}$ and $\frac{2}{1-\gamma_d}$ gives
\[
|J_{5,\mathrm{low}}|\le\delta\DD+C\delta^{-\frac{1+\gamma_d}{1-\gamma_d}}M^{2}\Lpnorm{\nabla\tau\one}{2}^{\frac{2\gamma_d}{1-\gamma_d}}\EE ,
\]
where $\frac{2\gamma_d}{1-\gamma_d}=2$ if $d=2$ and $=6$ if $d=3$; in the latter case $\|\nabla\tau\one\|_{L^2}^6\le M^4\|\nabla\tau\one\|_{L^2}^2$, and in both cases $M\le1$ and $\|\nabla\tau\one\|_{L^2}^2\le F$. Choosing $\delta=\frac{k}{96}$ for each of these four terms, their sum is bounded by
\[
\frac{k}{24}\DD+C_0\,k^{-7}\,F\,\EE .
\]

\smallskip\noindent\emph{Conclusion.}
Inserting the three groups of bounds into \eqref{eq:step1} and recalling $\frac{\eta}{8}\ZZd+\frac k8\YY\ge\frac k8\DD$, we arrive at
\begin{equation}\label{eq:final-ode}
\frac12\frac{d}{dt}\LL+\frac{\eta}{2}\ZZd+\frac{k}{4}\YY\le C_2\,F(t)\,\EE(t),
\qquad C_2:=C_0\,k^{-7},
\end{equation}
where, by \eqref{eq:k}, $C_2$ depends only on $d,s,\beta$ and $\eta$. By \eqref{eq:equiv}, $\EE\le2\LL$, so
\[
\frac{d}{dt}\LL\le 4C_2F(t)\,\LL,
\]
and Gronwall's inequality together with \eqref{eq:equiv} and \eqref{eq:MF} yields, for all $t\ge0$,
\[
\EE(t)\le2\LL(t)\le2\LL(0)\exp\Bigl(4C_2\int_0^tF\Bigr)\le3\,\EE(0)\exp\Bigl(4C_2\int_0^\infty F\Bigr)\le3\,e^{8C_*C_2\,\varepsilon^2}\,\EE(0).
\]
This is \eqref{eq:unif-dep} with $C_1:=8C_*C_2$. Integrating \eqref{eq:final-ode} in time and using $\frac{\eta}{2}\ZZd+\frac k4\YY\ge\frac k4\DD$, $\LL\ge0$ and the bound just obtained,
\[
\frac k4\int_0^\infty\DD\,dt\le\frac12\LL(0)+C_2\int_0^\infty F\,\EE\,dt
\le\frac34\EE(0)+2C_*C_2\varepsilon^2\cdot3e^{C_1\varepsilon^2}\EE(0),
\]
which gives \eqref{eq:unif-dep-diss} after enlarging $C_1$ if necessary. This completes the proof of \Cref{thm:unif-dep}. \qed

\medskip
\begin{proof}[Proof of \Cref{cor:holder}]
For $r\le s'\le s$ and $\theta=\frac{s'-r}{s-r}$, H\"older's inequality on the Fourier side gives $\|\Lambda^{s'}f\|_{L^2}\le\|\Lambda^rf\|_{L^2}^{1-\theta}\|\Lambda^sf\|_{L^2}^\theta$, and then H\"older's inequality for sums of two terms gives $\|f\|_{H^{s'}}\le\|f\|_{H^r}^{1-\theta}\|f\|_{H^s}^{\theta}$ for the norms \eqref{eq:norm-def}. Applying this to $\du(t)$ and $\dtau(t)$, using $\|\du(t)\|_{H^s}^2+\|\dtau(t)\|_{H^s}^2\le 2(\|u\one\|_{H^s}^2+\|u\two\|_{H^s}^2+\|\tau\one\|_{H^s}^2+\|\tau\two\|_{H^s}^2)\le4C_*\varepsilon^2$ and \eqref{eq:unif-dep}, the claim follows.
\end{proof}

\begin{remark}[Optimality of $r$ for the energy method]
\label{rmk:optimal}
We explain in what sense the loss of $2\beta-1$ derivatives is sharp. The index $r=s+1-2\beta$ is forced by the term $J_{3,\mathrm{high}}=-(\Lambda^r(\du\cdot\nabla u^{(2)}),\Lambda^r\du)_{L^2}$, in which the reference velocity carries one derivative and no commutator structure is available. This term contains $2r+1$ derivatives in total. The resources at our disposal are: the energy, which controls $r$ derivatives of $\du$ with a coefficient that is only bounded in time; the hidden dissipation, which controls $r+1-\beta$ derivatives of $\du$ in $L^2_t$; and the reference solution, which by \eqref{eq:small-data-bound} is controlled in $L^\infty_tH^{s}$ and in $L^2_tH^{s+1-\beta}$. Suppose we try to close the estimate at a level $r'$ in place of $r$. There are two ways to handle $J_{3,\mathrm{high}}$ in a uniform-in-time Gronwall argument.
\begin{enumerate}[(1)]
\item If the term is to enter as a multiple of $\EE$, its coefficient must be integrable on $(0,\infty)$, so the reference velocity must be measured in $L^2_tH^{s+1-\beta}$, and then at most one factor of $\du$ can be placed in the dissipation. The most favorable distribution of derivatives is
\[
|J_{3,\mathrm{high}}|\le\Lpnorm{\Lambda^{\rho}(\du\cdot\nabla u^{(2)})}{2}\Lpnorm{\Lambda^{2r'-\rho}\du}{2},\qquad
\rho\le s-\beta,\quad 2r'-\rho\le r'+1-\beta ,
\]
which is possible if and only if $r'\le\rho+1-\beta\le s+1-2\beta$.
\item Alternatively, the term can be absorbed by the dissipation using the smallness of the reference solution, as in the type (A) bounds. Then the reference velocity may be measured in $L^\infty_tH^{s}$, i.e. $\nabla u^{(2)}\in H^{s-1}$, but both factors of $\du$ must now be placed in the dissipation: $\rho\le\min\{s-1,\,r'+1-\beta\}$ and $2r'-\rho\le r'+1-\beta$, which forces $r'\le s-\beta$.
\end{enumerate}
Since $s-\beta\le s+1-2\beta$ for $\beta\le1$, the best index the energy method can reach is $r'=s+1-2\beta$, which is exactly the value of $r$ in \Cref{thm:unif-dep}; the two options coincide only when $\beta=1$. In this sense the loss of $2\beta-1$ derivatives is optimal for the energy method, and it is the precise price of the fact that the velocity dissipation generated by the wave structure \eqref{eq:wave-nonlinear} is of order $2(1-\beta)$ rather than of order $2$.

We emphasize that this is a statement about the method and not about the solution map itself. For $\beta=\frac12$ there is no loss, so the result is sharp. For $\beta>\frac12$, whether the solution map is Lipschitz, or merely uniformly continuous, in $H^{r'}$ for some $r'\in(r,s]$ remains open; a negative answer would require a counterexample in the spirit of \cite{AlexGerard,Inci} for the coupled system, where the stretching mechanism of the transport terms competes with the dissipation transferred from the stress, and we do not pursue this here.
\end{remark}

\vskip .2in
\noindent\textbf{Acknowledgments.} Wu was partially supported by the National Science Foundation of the United States under DMS 2104682 and DMS 2309748.

\end{document}